% Version 1. Started in Queenstown, 20 December 2015
% Version 2. Started 26 December 2015
% Version 3. Started 4 January 2016

\documentclass[11pt]{amsart}

\usepackage{amsmath,graphicx,url}

\usepackage[mathlines]{lineno}
%\linenumbers

\usepackage[colorlinks=false]{hyperref}

\usepackage{cleveref}

\theoremstyle{plain}
\newtheorem{theorem}{Theorem}[section]
\newtheorem{lemma}[theorem]{Lemma}
\newtheorem{proposition}[theorem]{Proposition}
\newtheorem{claim}{}[theorem]

\crefname{claim}{\hspace{-0.35em}}{\hspace{-0.35em}}
\Crefname{claim}{\hspace{-0.35em}}{\hspace{-0.35em}}

\newcommand{\dash}{\nobreakdash-\hspace{0pt}}
\newcommand{\ba}{\backslash}

\title[Excluded minors for gammoids]
{The antichain of excluded minors for the class of gammoids is maximal}

\author[Mayhew]{Dillon Mayhew}
\address{School of Mathematics and Statistics,
Victoria University of Wellington,
New Zealand}
\email{dillon.mayhew@vuw.ac.nz}

\date{\today}

\sloppy

\begin{document}

\begin{abstract}
Every gammoid is a minor of an excluded minor for the class
of gammoids.
\end{abstract}

\maketitle

\addtocounter{section}{1}

In \cite{Gee08}, Geelen conjectured that for every real-representable matroid,
$N$, there is an excluded minor, $M$, for the class of real-representable
matroids, such that $M$ has an $N$\dash minor.
Mayhew, Newman, and Whittle proved this in \cite{MNW09},
and showed that the statement holds even when we replace the real
numbers with an arbitrary infinite field.
This is distinctly different to the situation for finite fields,
where we know the number of
excluded minors to be finite, thanks to the resolution of
Rota's conjecture \cite{GGW14}.

In a comment on the \url{matroidunion.org} blog, Geelen
asked whether the same property holds for the class of gammoids \cite{VZ13}.
It is this question that we answer here.

\begin{theorem}
\label{maintheorem}
Let $N$ be a gammoid.
There is an excluded minor, $M$, for the class of gammoids, such that
$M$ has an $N$\dash minor.
\end{theorem}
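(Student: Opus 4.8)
The plan is to recast the theorem as a purely constructive statement. Since the gammoids form a class that is closed under minors, duality, and direct sums, a matroid $M$ is an excluded minor exactly when $M$ is not a gammoid but every single-element deletion $M\ba e$ and every single-element contraction $M/e$ is a gammoid. Thus it suffices, for a given gammoid $N$, to build a single matroid $M$ satisfying three conditions: (i) $N$ is a minor of $M$; (ii) $M$ is not a gammoid; and (iii) $M\ba e$ and $M/e$ are gammoids for every element $e$. Any such $M$ is simultaneously an excluded minor and a matroid with an $N$\dash minor, which is precisely what is required.

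First I would embed $N$ into a larger gammoid $N^{+}$ that has ``room to spare''. Concretely, I would realise $N$ as $N^{+}/C\ba D$ for a controlled pair of sets $C,D$, choosing $N^{+}$ so that the elements of $C\cup D$ sit generically with respect to the core carrying $N$; the point of this padding is to make the eventual $N$\dash minor robust, so that it survives the particular deletion and contraction used to exhibit it. I would then adjoin to $N^{+}$ a small, carefully placed \emph{gadget}: a bounded set of new elements whose only purpose is to force a single gammoid-defining linkage (or separation) condition to fail. The resulting matroid $M$ is then not a gammoid because of this one violated condition, while $N$ remains a minor because the gadget is attached away from the core.

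The heart of the argument, and the step I expect to be the main obstacle, is verifying condition (iii): that \emph{every} single-element minor of $M$ is a gammoid, at the same time that $M$ itself is not. The difficulty is that the class of gammoids has no finite set of excluded minors, so non-membership cannot be detected by checking against a short list; instead one must certify positively, for the entire family of matroids $\{M\ba e, M/e\}$, that each is a gammoid, typically by exhibiting an explicit directed-graph representation (or, dually, a transversal presentation) of each. To keep this finite and tractable I would build the gadget with enough symmetry that the single-element minors fall into only a few isomorphism classes, so that a handful of explicit linkage constructions covers them all, while a single counting or linkage obstruction shows that $M$ is not itself a gammoid. Tuning the gadget so that removing \emph{any} one element relaxes the violated condition---neither too weak (then $M$ would already be a gammoid) nor too strong (then some $M\ba e$ or $M/e$ would still fail)---is the delicate balance on which the whole construction turns.
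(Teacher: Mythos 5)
Your opening reduction is correct and matches the paper's framing: since gammoids are minor-closed (\Cref{whisky}), it suffices to construct one matroid $M$ that has an $N$\dash minor, is not a gammoid, and has every single-element deletion and contraction a gammoid. But everything after that point in your proposal is a statement of intent rather than an argument: the ``gadget'' is never specified, and the two steps on which the theorem actually rests are left unresolved.

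First, you give no mechanism for certifying that $M$ is \emph{not} a gammoid. This is a genuine obstacle, not a formality: there is no workable intrinsic ``gammoid-defining linkage condition'' whose failure you can simply arrange, and the class has infinitely many excluded minors, so non-membership cannot be read off a list any more than membership can. The paper's route is indirect: gammoids are representable over every infinite field (\Cref{afghan}), so it suffices to build $M$ to violate the Ingleton inequality, which certifies non-representability over all fields at once. Second, for condition (iii) --- which you yourself flag as the main obstacle --- your proposed remedy (enough symmetry that the single-element minors fall into a few isomorphism classes) cannot work as stated: the core carrying $N$ is an arbitrary gammoid, its elements are not interchangeable, and the minors $M\backslash e$ for $e$ in the core are genuinely different matroids, one for each element of $N$. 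The paper resolves the tension between (ii) and (iii) with two ideas absent from your proposal: $M$ is obtained from a gammoid $M'$ by \emph{relaxing a circuit-hyperplane} $C\cup D$, so $M\backslash x=M'\backslash x$ is a gammoid for free whenever $x\in C\cup D$, with the corresponding contractions handled by a free-element argument (\Cref{tipple}); and deletions of core elements are handled by exhibiting \emph{several distinct directed-graph presentations of the same matroid} (the graphs $C_{1},C_{2}$ and their modifications $D_{1},D_{2}$), arranged so that whichever core element is deleted, one of the alternative presentations realizes the resulting matroid. Without a concrete substitute for these two mechanisms --- a positive certificate of non-membership and a uniform positive certificate for all single-element minors --- the ``delicate balance'' you correctly identify is exactly the part of the theorem that remains unproved.
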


A sketch of the proof appears as a comment on the same blog post
where Geelen posed the question.
This article corrects an error in the construction given there, while
fleshing out some additional details.
The broad strategy of the proof is similar to that in \cite{MNW09}.

Note that \Cref{maintheorem} implies that every matroid
either has an excluded minor for gammoids as a minor,
or is contained as a minor in such an excluded minor.
Equivalently, the excluded minors for the class of gammoids
form a maximal antichain in the minor ordering of matroids.
It also follows that there are infinitely many excluded minors
for the class of gammoids, a fact first noted by Ingleton \cite{Ing77}.

Gammoids were developed by Perfect \cite{Per68}, Mason \cite{Mas72},
and Ingleton and Piff \cite{IP73}.
We recall the definition here.
Let $G$ be a directed graph on the vertex set $V$.
Fix a subset $T\subseteq V$, and let $X$ be an arbitrary
subset of $V$.
If there is a collection of $|X|$ vertex-disjoint directed
paths such that each path begins with a vertex in $X$ and
ends with a vertex in $T$, then we shall say that
$X$ is \emph{linked} to $T$, and we shall call the collection of
paths a \emph{linking}.
If $x$ and $y$ are the first and last vertices in a path belonging to the
linking, we say $x$ is \emph{linked} to $y$.
Note that a path in the linking may consist of a single vertex, in the case
that a vertex belongs to both $X$ and $T$.
The subsets of $V$ that are linked to $T$ form the independent
sets of a matroid on the ground set $V$ \cite[Theorem~2.4.4]{Oxl11}.
The restriction of this matroid to the subset $S\subseteq V$
is denoted $L(G,S,T)$.
Note that the independent sets of $L(G,S,T)$
are exactly the subsets of $S$ that are linked to $T$ in $G$.
Any matroid of the form $L(G,S,T)$ is a \emph{gammoid}.
If $S$ is the entire vertex set, $V$, then $L(G,S,T)$ is a \emph{strict gammoid};
hence every gammoid is a restriction of a strict gammoid.
A matroid is a strict gammoid if and only if it is a dual of
a transversal matroid \cite[Corollary~2.4.5]{Oxl11}.
Since transversal matroids are representable over all
infinite fields \cite{PW70}, the next result follows.

\begin{proposition}
\label{afghan}
Let $N$ be a gammoid.
Then $N$ is representable over every infinite field.
\end{proposition}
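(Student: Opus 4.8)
\section{Proof proposal}

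The plan is to assemble the proof by chaining the three facts recorded immediately above the statement, together with two standard closure properties of the class of matroids representable over a fixed field. In outline: a gammoid is a restriction of a strict gammoid, a strict gammoid is the dual of a transversal matroid, and a transversal matroid is representable over every infinite field; then duality and restriction carry representability back down to the original gammoid.

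Concretely, I would write $N = L(G,S,T)$ for some directed graph $G$ on vertex set $V$ with $S,T \subseteq V$, and observe, as already noted in the excerpt, that $N$ is the restriction to $S$ of the strict gammoid $M = L(G,V,T)$, so $N = M|S$. By the cited characterisation, $M$ is the dual of a transversal matroid, say $M = L^{*}$ with $L$ transversal. Now fix an arbitrary infinite field $F$. The cited theorem \cite{PW70} gives that $L$ is $F$\dash representable; since the class of $F$\dash representable matroids is closed under duality, $M = L^{*}$ is $F$\dash representable; and since that class is closed under restriction (that is, under deletion of the elements outside $S$), $N = M|S$ is $F$\dash representable. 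As $F$ was arbitrary, $N$ is representable over every infinite field.

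I expect essentially no obstacle once the cited results are granted: all the genuine content lives inside the theorem \cite{PW70} that transversal matroids are representable over every infinite field, which I am entitled to assume. The only point demanding any care is uniformity in the field, namely checking that the duality and restriction steps used to descend from $L$ to $N$ preserve the field of representation; but both operations manifestly keep us inside the class of matroids representable over the same $F$, so no difficulty arises.
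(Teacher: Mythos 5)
Your proposal is correct and is essentially the paper's own argument: the paper derives the proposition immediately from the same chain of facts (gammoid $=$ restriction of a strict gammoid $=$ restriction of the dual of a transversal matroid, plus \cite{PW70}), leaving the duality and restriction closure steps implicit where you spell them out. No discrepancy to report.
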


The next result is \cite[Proposition~3.2.12]{Oxl11}.

\begin{proposition}
\label{whisky}
The class of gammoids is closed under minors.
\end{proposition}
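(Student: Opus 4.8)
The plan is to show that the class of gammoids is closed under single-element deletion and single-element contraction; since every minor is obtained by a sequence of such operations, this suffices. Let $M = L(G,S,T)$ with $G$ a digraph on vertex set $V$. Deletion is immediate from the definition: the independent sets of $M \setminus e$ are exactly the independent sets of $M$ that avoid $e$, that is, the subsets of $S \setminus \{e\}$ that are linked to $T$ in $G$, so $M \setminus e = L(G, S \setminus \{e\}, T)$ is again a gammoid.

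For contraction, fix $e \in S$. If $e$ is a loop of $M$ then $M / e = M \setminus e$, which is a gammoid by the previous paragraph. So assume $e$ is not a loop; then $\{e\}$ is independent, hence linked to $T$, so there is a directed path $P$ from $e$ to some $t_0 \in T$ (the trivial one-vertex path when $e \in T$). Recall that a set $X \subseteq S \setminus \{e\}$ is independent in $M/e$ precisely when $X \cup \{e\}$ is linked to $T$ in $G$. The idea is to first rewrite the representation so that $e$ itself is a target vertex, and then to contract $e$ simply by deleting it from the graph.

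Two lemmas carry this out. First (and this is the main obstacle), let $G'$ be obtained from $G$ by reversing every arc of $P$, and set $T' = (T \setminus \{t_0\}) \cup \{e\}$; then $L(G,S,T) = L(G',S,T')$ as matroids on $S$. I would prove this by an augmenting-path argument: given vertex-disjoint paths linking a set $X$ to $T$ in $G$, superimpose $P$ and trace the arcs of the symmetric difference in $G'$ to extract vertex-disjoint paths linking $X$ to $T'$, the rerouting affecting only the endpoints $t_0$ and $e$; the converse direction is the same statement applied to $G'$ and the reversed copy of $P$ (whose reversal returns $G$ and $T$), so one implication together with this symmetry gives the equality. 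Second, since $e \in T' \cap S$, a set $X \subseteq S \setminus \{e\}$ has $X \cup \{e\}$ linked to $T'$ in $G'$ if and only if $X$ is linked to $T' \setminus \{e\}$ in the digraph $G' - e$ obtained by deleting the vertex $e$: for the forward direction, $e$ may be taken as its own trivial path, after which the remaining paths avoid $e$ and link $X$ into $T' \setminus \{e\}$; conversely, any linking of $X$ to $T' \setminus \{e\}$ in $G' - e$ avoids $e$, so appending the trivial path at $e$ links $X \cup \{e\}$ to $T'$ in $G'$.

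Combining the two lemmas, $M / e = L(G' - e, \, S \setminus \{e\}, \, T \setminus \{t_0\})$, which is a gammoid. Hence the class is closed under contraction as well, and the proposition follows. The only genuinely substantive step is the reversal lemma; both the deletion identity and the contraction-of-a-target-vertex identity are routine consequences of the definition of linking.
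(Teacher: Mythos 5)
Your overall architecture is fine: reducing to single-element deletion and contraction, the deletion identity $M\setminus e = L(G, S\setminus\{e\}, T)$, and your second lemma (contracting a vertex of $S\cap T$ by deleting it from the digraph, which is exactly \Cref{infant} of the paper) are all correct. The gap is your first lemma, the path-reversal claim, and it is not a presentational flaw: the claim is false. Take $V=\{x,e,a,t_0,t_1\}$ with arcs $x\to a$, $e\to a$, $a\to t_0$, $a\to t_1$, and let $S=\{x,e\}$, $T=\{t_0,t_1\}$. In $M=L(G,S,T)$ the set $\{x,e\}$ is dependent, since every path from $x$ and every path from $e$ must pass through the bottleneck vertex $a$; hence $x$ is a loop of $M/e$. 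Now $e$ is not a loop, and $P\colon e\to a\to t_0$ is a path from $e$ to $T$. Reversing $P$ gives $G'$ with arcs $x\to a$, $a\to e$, $t_0\to a$, $a\to t_1$, and $T'=\{e,t_1\}$. But $\{x,e\}$ \emph{is} linked to $T'$ in $G'$ (the trivial path at $e$, together with $x\to a\to t_1$), so $L(G',S,T')\ne L(G,S,T)$; correspondingly your final formula returns $L(G'-e,\{x\},\{t_1\})$, in which $x$ is a coloop rather than a loop. The conceptual failure is that once $e$ is promoted to a target, it no longer needs to consume the bottleneck vertex $a$, so dependencies caused by such bottlenecks vanish: the reversed presentation has strictly more linked sets. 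No symmetric-difference or augmenting-path argument can repair this, because the obstruction is not constructing a linking in $G'$ from one in $G$ --- it is the converse direction that fails, and your (correct) symmetry observation only shows that the forward implication you planned to prove is itself false in general.

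For comparison, the paper does not prove this proposition directly; it cites \cite[Proposition~3.2.12]{Oxl11}, where the argument runs through the Ingleton--Piff duality \cite{IP73}: every gammoid is a restriction of a strict gammoid, strict gammoids are precisely the duals of transversal matroids, transversal matroids are trivially closed under deletion, so strict gammoids are closed under contraction, and hence gammoids are closed under minors. Your endgame --- re-present the matroid so that $e$ lies in the target set, then contract it by deleting the vertex --- is exactly the right move, and is what the paper's \Cref{toupee} does in another context; but there the re-targeting (any basis of a strict gammoid can be made the target set) is extracted from the transversal-matroid duality, not from a local arc reversal. Be warned that the obvious repair attempts also fail: in the example above, reversing an entire maximum linking (e.g.\ the trivial path at $t_0$ together with $x\to a\to t_1$, making $\{x,t_0\}$ the new target set) again produces a digraph in which $\{x,e\}$ becomes independent. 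So some genuinely global construction, in practice the duality argument, seems unavoidable here.
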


\begin{lemma}
\label{infant}
Let $G$ be a directed graph, and let $S$ and $T$ be subsets of
vertices.
Let $N$ be the gammoid $L(G,S,T)$.
If $t$ is a vertex in $S\cap T$, then $N/t=L(G-t,S-t,T-t)$.
\end{lemma}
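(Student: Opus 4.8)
The plan is to compare the independent sets of the two matroids directly, using the path\dash linking description of each. First I would observe that since $t\in S\cap T$, the single\dash vertex path consisting of $t$ alone links $\{t\}$ to $T$, so $\{t\}$ is independent in $N$ and $t$ is not a loop; hence $N/t$ is a genuine contraction, and a set $Y\subseteq S-t$ is independent in $N/t$ precisely when $Y\cup\{t\}$ is independent in $N$. On the other hand, $Y$ is independent in $L(G-t,S-t,T-t)$ precisely when $Y$ is linked to $T-t$ in $G-t$. So the whole lemma reduces to the equivalence: for every $Y\subseteq S-t$, the set $Y\cup\{t\}$ is linked to $T$ in $G$ if and only if $Y$ is linked to $T-t$ in $G-t$.

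For the forward direction I would take a linking of $Y\cup\{t\}$ to $T$ in $G$; as $t\notin Y$, this consists of $|Y|+1$ vertex\dash disjoint paths, exactly one of which, say $P$, begins at $t$. I would discard $P$ and keep the remaining $|Y|$ paths, which begin at the vertices of $Y$. Because these paths are vertex\dash disjoint from $P$ and $t$ lies on $P$, none of them meets $t$, so they all survive in $G-t$; and since their endpoints lie in $T$ but cannot equal $t$ (again because $t$ lies on $P$), those endpoints lie in $T-t$. Thus the surviving paths link $Y$ to $T-t$ in $G-t$. For the reverse direction I would take a linking of $Y$ to $T-t$ in $G-t$ and simply adjoin the single\dash vertex path consisting of $t$; the original paths avoid $t$, so the enlarged collection is vertex\dash disjoint, its starting vertices are $Y\cup\{t\}$, and its endpoints lie in $(T-t)\cup\{t\}\subseteq T$, giving a linking of $Y\cup\{t\}$ to $T$ in $G$.

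I do not expect a serious obstacle here, since the result is essentially a bookkeeping translation between contraction and vertex\dash deletion. The one point needing care is the forward direction: one must verify that after removing the path through $t$ the remaining paths really do constitute a valid linking in the smaller instance---namely that they avoid the vertex $t$ and that their terminal vertices lie in $T-t$ rather than merely in $T$. Both facts follow from vertex\dash disjointness together with the observation that $t$ itself lies on the discarded path, and this is the step I would write out most explicitly.
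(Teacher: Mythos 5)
Your proposal is correct and follows essentially the same route as the paper's proof: translate independence in $N/t$ into linkings of $Y\cup\{t\}$ to $T$, then pass between such linkings and linkings of $Y$ to $T-t$ in $G-t$ by discarding or adjoining the single-vertex path at $t$. Your version is somewhat more explicit than the paper's (in verifying that $t$ is not a loop and that the surviving paths end in $T-t$, not merely $T$), but the underlying argument is identical.
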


\begin{proof}
Let $X$ be a subset of $S-t$.
If $X$ is independent in $N/t$, then $X\cup t$ is independent in $N$,
so $X\cup t$ is linked to $T$ in $G$.
The paths linking $X$ to $T$ cannot use $t$, so $X$ is linked
to $T-t$ in $G-t$.
Hence $X$ is independent in $L(G-t,S-t,T-t)$.
Now let $X$ be independent in $L(G-t,S-t,T-t)$.
Then $X$ is linked to $T-t$ in $G-t$, and hence in $G$.
But the paths linking $X$ to $T-t$ do not use the vertex $t$,
so $X\cup t$ is linked to $T$.
Thus $X\cup t$ is independent in $N$, and $X$ is independent in $N/t$.
\end{proof}

\begin{lemma}
\label{toupee}
Let $N$ be a gammoid on the ground set $S\cup T$, where $S\cap T=\emptyset$,
and $T$ is a basis of $N$.
There exists a directed graph, $G$, with vertex set $V$, such that
$S\cup T \subseteq V$, and $N=L(G,S\cup T, T)$.
\end{lemma}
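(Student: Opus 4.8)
The plan is to start from an arbitrary gammoid representation of $N$ and then reroute a linking of the basis $T$ so that $T$ itself becomes the target set. Since $N$ is a gammoid on the ground set $S\cup T$, there is a directed graph $G_0$ on a vertex set $V\supseteq S\cup T$, together with a set $T_0\subseteq V$, such that $N=L(G_0,S\cup T,T_0)$. The source set of this representation is already the whole ground set $S\cup T$, so the only feature that must be altered is the target set: I want to replace $T_0$ by the prescribed basis $T$.

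Because $T$ is a basis of $N$, it is independent in $L(G_0,S\cup T,T_0)$, so there is a linking $\mathcal{L}=\{L_t : t\in T\}$, where each $L_t$ is a directed path from $t$ to a vertex of $T_0$ and the paths are pairwise vertex-disjoint. As $|T|=r(N)$ is the maximum number of vertex-disjoint paths from $S\cup T$ to $T_0$, these paths terminate at exactly $|T|$ vertices of $T_0$. I would then let $G$ be the digraph obtained from $G_0$ by reversing the direction of every arc lying on one of the paths of $\mathcal{L}$, and I would claim that $N=L(G,S\cup T,T)$.

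The heart of the matter is to verify this last claim, that is, to show that for every $X\subseteq S\cup T$ the set $X$ is linked to $T_0$ in $G_0$ if and only if $X$ is linked to $T$ in $G$; I expect this equivalence to be the main obstacle. For the forward direction I would take a linking of $X$ to $T_0$ in $G_0$ and superimpose it on $\mathcal{L}$, then apply a path-tracing (augmenting-path) argument to the union of these two families of vertex-disjoint paths: following the $X$\dash paths forwards and switching onto the reversed arcs of $\mathcal{L}$ produces, after discarding arcs traversed in both directions, a family of vertex-disjoint paths in $G$ that run from $X$ to $T$. Reversing the arcs of $\mathcal{L}$ a second time restores $G_0$, so the converse direction follows by the symmetric argument, turning a linking of $X$ to $T$ in $G$ into a linking of $X$ to $T_0$ in $G_0$. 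The delicate points in carrying this out are to handle arcs and vertices shared by the two path families correctly when re-extracting disjoint paths, and to check that the vertices of $T_0$ left untouched by the reversal create no spurious linkings to $T$; the latter is controlled by the fact that $T$ is a basis, so that at most $r(N)=|T|$ elements of $S\cup T$ can ever be simultaneously linked and the rank cannot increase under the rerouting.
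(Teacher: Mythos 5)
Your plan---reversing the arcs of a linking of $T$ to turn $T$ into the target set---is a genuinely different route from the paper's proof (which passes to a strict gammoid $M$ having $N$ as a restriction, extends $T$ to a basis $T'$ of $M$, invokes the transversal-dual presentation of Oxley's Lemma~2.4.4 to get a presentation of $M$ with target set $T'$, and then deletes the vertices of $T'-T$, using \Cref{infant} to identify vertex deletion with contraction). The reversal technique is classical and the statement you need is in fact true, but your proposal leaves precisely the hard step unproved, and the justification you offer for the most dangerous point does not constitute an argument. The crux is that $\mathcal{L}$ links $T$ \emph{into} $T_0$, not onto it: it covers only a subset $T_0'\subseteq T_0$ of size $|T|=r(N)$, and in general $|T_0|>r(N)$. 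Now take $X$ independent in $N$ with a linking $\mathcal{P}$ to $T_0$ in $G_0$. Paths of $\mathcal{P}$ may terminate at vertices of $T_0\setminus T_0'$, where no reversed arcs are available; when you superimpose $\mathcal{P}$ on the reversed $\mathcal{L}$ and trace paths, some paths starting in $X$ can end in $T_0\setminus T_0'$ rather than in $T$, so the forward direction (independent in $N$ implies linked to $T$ in $G$) simply does not follow from the cancellation argument. What saves it is exactly the hypothesis that $T$ is a basis, and using it requires real work: for instance, take a Menger cut $Z$ of size $r(N)=|T|$ separating $S\cup T$ from $T_0$ in $G_0$, observe that each path of $\mathcal{L}$ meets $Z$ exactly once (bijectively), and splice the initial segments of $\mathcal{P}$ (up to their first meeting with $Z$) onto the tails of $\mathcal{L}$, using the cut property to guarantee the two families meet only at the splicing vertices. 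Your stated fix---``the rank cannot increase under the rerouting''---is circular: that ranks agree is a consequence of the equivalence you are trying to prove, not a tool for proving it.

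A second, smaller gap: ``discarding arcs traversed in both directions'' in the superimposed family yields \emph{arc}-disjoint paths, but not automatically \emph{vertex}-disjoint ones. Two extracted paths can pass through the same vertex without sharing an arc (one entering on a forward arc of $\mathcal{P}$ and leaving on a reversed arc of $\mathcal{L}$, the other doing the opposite), and vertex-disjointness is the whole content of a linking; handling this requires working in the vertex-split digraph or an induction on conflicts. Finally, note that the converse direction is not literally ``symmetric'': reversing $\mathcal{L}$ a second time shows that a set linked to $T$ in $G$ is linked to $T_0'$ in $G_0$, which suffices for that direction only because $T_0'\subseteq T_0$; the asymmetry between $T_0$ and $T_0'$ is exactly why the forward direction is the hard one. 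By contrast, the paper's proof avoids all of this path surgery by outsourcing it to the known equivalence between strict gammoids and duals of transversal matroids, at the cost of invoking that machinery; your route is more self-contained in principle, but as written its core is missing.
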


\begin{proof}
Since $N$ is a gammoid, it is a restriction of a strict gammoid, $M$.
Let $T'$ be a basis of $M$ that contains $T$.
As $M$ is the dual of a transversal matroid, Lemma~2.4.4 in \cite{Oxl11}
implies there is a directed graph, $G'$, on vertex set $V'$,
such that $M=L(G',V',T')$.
Construct $G$ by deleting the vertices in $T'-T$ from $G'$.
\Cref{infant} and induction imply that
$L(G,S\cup T, T)$ is $(M/(T'-T))|(S\cup T)$.
This last matroid is equal to $M|(S\cup T)=N$, so we
are done.
\end{proof}

\begin{lemma}
\label{tipple}
Let $N$ be a matroid and assume that the element $x$ is freely placed
in $N$.
Then $N$ is a gammoid if and only if $N\ba x$ is.
\end{lemma}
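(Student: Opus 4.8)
The plan is to prove the two implications separately. One direction is immediate: since the class of gammoids is closed under minors (\Cref{whisky}) and $N\ba x$ is a minor of $N$, if $N$ is a gammoid then so is $N\ba x$. All of the work lies in the converse, where I must manufacture a graph representing $N$ out of a graph representing $N':=N\ba x$.

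Write $E'$ for the ground set of $N'$ and let $r=r(N')$. Since $x$ is freely placed, $N$ is the free extension of $N'$ by $x$; equivalently, for $X\subseteq E'$ the set $X\cup x$ is independent in $N$ exactly when $X$ is independent and non\dash spanning in $N'$, while $r(N)=r$. First I would fix a basis $B$ of $N'$ and apply \Cref{toupee} (with $B$ in the role of the target set) to obtain a directed graph $G'$, on a vertex set $V\supseteq E'$, with $N'=L(G',E',B)$. I would then build $G$ from $G'$ by adjoining a single new source vertex $x$ together with one arc from $x$ to each vertex of $B$, and claim that $N=L(G,E'\cup x,B)$. Because $x$ has no in\dash arcs, deleting it recovers $L(G',E',B)=N'$, so it remains only to identify the independent sets of the form $X\cup x$.

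The crux is the equivalence: for $X\subseteq E'$, the set $X\cup x$ is linked to $B$ in $G$ if and only if $X$ is independent and non\dash spanning in $N'$. For the forward direction, a linking of $X\cup x$ must route the path from $x$ into $B$, while the remaining vertex\dash disjoint paths lie in $G'$ and link $X$ to the other targets; hence $X$ is linked to a proper subset of $B$, so it is independent with $|X|\le|B|-1<r$. The reverse direction is the step I expect to be the main obstacle. Given an independent, non\dash spanning $X$, I have a linking of $X$ to $B$ using only $|X|<r$ of the targets, but I must find a target $b\in B$ that is \emph{entirely} unused — not merely unused as an endpoint, but also not traversed as an interior vertex of some path — so that the arc $x\to b$ extends the linking. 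The resolution is to choose a linking of $X$ to $B$ that uses the fewest arcs: if some path met $B$ at an interior vertex $t$, then vertex\dash disjointness forces $t$ to be the endpoint of no other path, so truncating that path at $t$ would produce a shorter linking, a contradiction. Thus a minimum\dash length linking meets $B$ only at its endpoints, leaving at least one vertex of $B$ completely free, and routing $x$ to it completes the linking of $X\cup x$. Combining the two directions shows that $L(G,E'\cup x,B)$ has precisely the independent sets of $N$, so $N$ is a gammoid.
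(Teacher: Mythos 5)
Your construction is the same one the paper uses --- adjoin $x$ as a new source with an arc to every vertex of a fixed basis of $N\ba x$ --- and your verification, though phrased in terms of independent sets rather than circuits, is sound for the case you treat. In particular, your minimal-arc truncation argument correctly establishes the fact (used implicitly in the paper's circuit-by-circuit verification) that a linking of fewer than $|B|$ vertices can be chosen so that no path meets $B$ at an interior vertex, leaving some target completely free for $x$. However, there is a genuine gap: your opening assertion that ``since $x$ is freely placed, $N$ is the free extension of $N\ba x$'' is false when $x$ is a coloop of $N$. The paper's notion of ``freely placed'' means only that $x$ lies in no non-spanning circuit (this is exactly how the lemma is later invoked: an element in no non-spanning circuit of $M/x$ is declared freely placed), and a coloop satisfies this vacuously, since it lies in no circuit at all. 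So the coloop case is within the scope of the statement, and your proof does not cover it.

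The failure in that case is not cosmetic. If $x$ is a coloop, then $r(N)=r(N\ba x)+1$ and $B\cup x$ is independent in $N$, whereas every independent set of your gammoid $L(G,E'\cup x,B)$ has at most $|B|=r(N\ba x)$ elements; so the matroid you construct provably differs from $N$, and your claimed equivalence (``$X\cup x$ is independent in $N$ exactly when $X$ is independent and non-spanning in $N\ba x$'') is wrong. The paper disposes of this case in one sentence before assuming $x$ lies in a circuit, and you need the same disclaimer; for instance, if $x$ is a coloop, take a graph representing $N\ba x$ as in \Cref{toupee}, add $x$ as an isolated vertex, and place it in both the ground set and the target set, so that $x$ links to itself and the resulting gammoid is $N\ba x$ with a coloop adjoined, which is $N$. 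With that one extra case your argument is complete, and it is essentially the paper's, differing only in that you verify the identity $N=L(G,E'\cup x,B)$ on independent sets while the paper verifies it on circuits.
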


\begin{proof}
Let $S$ be the ground set of $N$.
\Cref{whisky} implies that $N\ba x$ is a gammoid when $N$ is.
Assume $N\ba x$ is a gammoid.
If $x$ is a coloop in $N$, then it is easy to see that $N$ is a gammoid,
so assume $x$ is in a circuit.
Let $T$ be a basis of $N\ba x$.
By \Cref{toupee} we assume $N\ba x=L(G,S-x,T)$, for some directed graph $G$.
Construct $G'$ be adding the vertex $x$ to $G$ and adding
arcs from $x$ to every vertex in $T$.
Let $N'=L(G',S,T)$.
We claim $N=N'$.
Certainly $N\ba x=N'\ba x$.
Let $C$ be a circuit of $N$ that contains $x$.
Then $C$ is spanning, since $x$ is freely placed.
Hence $C-x$ is a basis in $N\ba x=N'\ba x$, so $C$ is
dependent in $N'$.
Since $C-x$ can be linked to $T$ in $G$, it can also be linked
to $T$ in $G'$, so $C-x$ is independent in $N'$.
If $y$ is an arbitrary element of $C-x$, then $C-\{x,y\}$
can be linked to $T$ in $G$, and hence in $G'$, and,
since $|C-\{x,y\}|=|T|-1$, we can also link $C-y=(C-\{x,y\})\cup x$ to
$T$ in $G'$.
Now it follows that $C$ is a circuit in $N'$.
Next assume $C$ is a circuit of $N'$ that contains $x$.
Then $C-x$ can be linked to $T$ in $G'$.
If $|C-x|<|T|$, then $C$ can be linked to $T$ in $G'$, a contradiction.
Therefore $|C-x|=|T|$, and $C$ is a spanning circuit.
Because $C-x$ is a basis of $N'\ba x=N\ba x$, it follows that $C$ contains
a circuit in $N$ that contains $x$.
As $x$ is freely placed in $N$, we deduce that $C$ is a circuit in $N$,
and the proof is complete.
\end{proof}

\begin{lemma}
\label{reaper}
Let $N$ be a gammoid.
Then $N$ is a minor of a gammoid, $N'$, such that
the ground set of $N'$ can be partitioned into two bases.
\end{lemma}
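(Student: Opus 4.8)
The plan is to build $N'$ from $N$ by first raising the rank through a sequence of coextensions realised directly in the digraph model, then adjoining freely placed elements so that the ground set has size exactly twice the rank, and finally checking that the resulting matroid packs two disjoint bases. First I would use \Cref{toupee} to write $N=L(G,E,T)$, where $E$ is the ground set of $N$, with $|E|=n$, and $T$ is a basis of size $r$. To increase the rank I would repeatedly coextend in the following way: given a realisation $L(H,S,T_0)$, introduce a new vertex $e$, place it in both the ground set and the target set, and add an arc from every existing vertex to $e$. The new matroid is visibly a gammoid, and since $e$ lies in the intersection of the ground set and the target set, \Cref{infant} shows that contracting $e$ returns the previous matroid.

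Applying this step $n$ times produces a gammoid $M$ of rank $n+r$ on a ground set of $2n$ elements, with $N=M/\{e_1,\ldots,e_n\}$. I would then invoke \Cref{tipple} to adjoin $2r$ freely placed elements, obtaining a gammoid $N'$ of rank $R=n+r$ on $2R=2n+2r$ elements; deleting the free elements and contracting $e_1,\ldots,e_n$ recovers $N$, so $N$ is a minor of $N'$. It then remains only to show that the ground set of $N'$ partitions into two bases. Because $|E(N')|=2R$, the matroid union theorem reduces this to verifying $|A|\le 2r_{N'}(A)$ for every $A\subseteq E(N')$: any covering of a $2R$-element ground set by two independent sets is forced to be a partition into two bases.

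The key computation is the rank function. A counting of vertex-disjoint linkings shows that for a set $A$ meeting the original elements in $B_0$, the coextension vertices in $B_e$, and the free elements in $A_f$, one has
\[
r_{N'}(A)=\min\!\Big(R,\; |B_e|+\min\big(|B_0|,\, r_N(B_0)+n-|B_e|\big)+|A_f|\Big),
\]
since each surplus element can be routed directly to one of the sink vertices $e_i$. A short case analysis on which term attains each minimum then yields $|A|\le 2r_{N'}(A)$; the only nontrivial case reduces to the inequality $|B_0|+|B_e|\le 2r_N(B_0)+2n$, which holds because $|B_0|\le n$, $|B_e|\le n$, and we performed $n=|E(N)|$ coextensions.

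I expect the main obstacle to be this base-packing verification, and in particular pinning down the rank function of the iterated coextension rigorously from the linking definition: one must check that routing surplus elements directly to the sinks $e_i$ never competes for intermediate vertices with the paths used to link into the original target set $T$, and that adjoining the free elements interacts correctly with the rank cap $R$. Taking the number of coextensions to be $|E(N)|$ is deliberately generous, so that even the worst case, in which $B_0$ consists of loops of $N$ and $r_N(B_0)=0$, still satisfies the required inequality.
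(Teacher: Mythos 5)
Your proposal is correct, but it takes a genuinely different route from the paper's. The paper also begins with \Cref{toupee}, writing $N=L(G,S\cup T,T)$ with $T$ a basis disjoint from $S$, but then makes a targeted, explicit construction: it fixes a maximum independent subset $X$ of $S$, adds one new sink $t_k$ (with a single arc $v_k\to t_k$) for each $v_k\in S-X$, and adds $|T|-|X|$ new source vertices $u_i$ with arcs into all of $T$. The two disjoint bases of the resulting gammoid are then exhibited directly by linkings, namely $T\cup(S-X)$ and $X\cup\{t_1,\ldots,t_m\}\cup\{u_1,\ldots,u_n\}$, and $N$ is recovered by deleting the $u_i$ and contracting the $t_k$ via \Cref{infant}. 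Your construction is generic rather than targeted: free coextensions realized in the digraph (one per element of $E(N)$), then free extensions via \Cref{tipple} to pad the ground set to size $2R$, with the base packing certified non-constructively by Edmonds' covering theorem ($|A|\le 2r(A)$ for all $A$ suffices to cover by two independent sets, which on a $2R$-element set of rank $R$ forces a partition into two bases). What your approach buys is that no clever choice of $X$ is needed and the packing reduces to a rank computation plus the trivial bounds $|B_0|,|B_e|\le n$; what it costs is the extra machinery (the covering theorem, which the paper never needs) and a rank-function computation that the paper avoids entirely.

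That rank computation is also the one place where your sketch needs repair. Your justification---``each surplus element can be routed directly to one of the sink vertices $e_i$''---and the property you propose to verify (that such direct routing ``never competes'' with the paths linking into $T$) are not right as stated: a surplus element $x\in B_0$ can perfectly well be an intermediate vertex of a path $Q$ in the linking of $Y_1$ to $T$. The resolution is re-routing rather than non-interference: since \emph{every} existing vertex (not only ground-set vertices) received an arc to each new sink, you may truncate $Q$ at the vertex immediately preceding $x$ and send it to an unused sink, while $x$ takes over the tail of $Q$ into $T$; iterating handles several surplus elements on one path. More cleanly, the same argument done once shows that your single digraph coextension is exactly the free coextension of the realized matroid, after which your formula follows by induction from the standard rank functions of free coextensions and free extensions. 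With either repair, your formula, equivalently $r_{N'}(A)=\min(R,\;|B_0|+|B_e|+|A_f|,\;r_N(B_0)+n+|A_f|)$ with $n=|E(N)|$, is correct, and your case analysis---trivial when the minimum is $|A|$ or $R$ (using $|E(N')|=2R$), and $|B_0|+|B_e|\le 2n\le 2r_N(B_0)+2n$ in the remaining case---completes the proof.
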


\begin{proof}
Assume the ground set of $N$ is $S\cup T$, where $S\cap T=\emptyset$,
and $T$ is a basis of $N$.
Let $G$ be a directed graph such that $N=L(G,S\cup T,T)$.
Let $X$ be a maximum-sized independent subset
of $S$, and let
$S-X$ be $\{v_{1},\ldots, v_{m}\}$.
Note $|X|\leq |T|$ and let $n=|T|-|X|$.
We construct the directed graph $G'$ as follows.
For each vertex $v_{i}$, add a vertex $t_{i}$ and an edge
directed from $v_{i}$ to $t_{i}$.
Add vertices $u_{1},\ldots, u_{n}$, and add edges directed
from each $u_{i}$ to all vertices in $T$.
We let $N'$ be
\[L(G',S\cup T\cup \{t_{1},\ldots, t_{m}\}\cup \{u_{1},\ldots, u_{n}\},
T\cup \{t_{1},\ldots, t_{m}\}).\]
Since each $v_{i}$ is linked to $t_{i}$, it follows that
$T\cup \{v_{1},\ldots, v_{m}\}$ is a basis of $N'$.
Note that $X\cup\{t_{1},\ldots, t_{m}\}\cup \{u_{1},\ldots, u_{n}\}$
has cardinality
\[|X|+(|S|-|X|)+(|T|-|X|)=|T|+|S|-|X|=|T\cup\{t_{1},\ldots, t_{m}\}|.\]
Moreover, $X$ is linked to $T$ in $G$, and by construction, so is
$X\cup\{u_{1},\ldots, u_{n}\}$.
It follows that $X\cup\{t_{1},\ldots, t_{m}\}\cup \{u_{1},\ldots, u_{n}\}$
is also a basis in $N'$, so the ground set of $N'$ is partitioned into
two bases.
It is clear from \Cref{infant} that
$N$ can be recovered from $N'$ by deleting
$\{u_{1},\ldots, u_{n}\}$, and contracting
$\{t_{1},\ldots, t_{m}\}$.
\end{proof}

\begin{proof}[Proof of \textup{\Cref{maintheorem}}]
\setcounter{theorem}{1}
Let $N$ be a gammoid.
By \Cref{reaper} we can assume that
the ground set of $N$ is $S_{1}\cup S_{2}$, where
$S_{1}\cap S_{2}=\emptyset$ and both $S_{1}$ and $S_{2}$
are bases of $N$.
Let $r$ be $|S_{1}|=|S_{2}|$.
It follows from \Cref{toupee} that there are directed graphs,
$A_{1}$ and $A_{2}$, such that
$N=L(A_{1},S_{1}\cup S_{2},S_{1})=L(A_{2},S_{1}\cup S_{2}, S_{2})$.
For each $i=1,2$, we will now construct further directed graphs,
$B_{i}$, $C_{i}$, and $D_{i}$.

Henceforth we let $\{i,j\}$ be $\{1,2\}$.
We construct the directed graph $B_{i}$ from $A_{i}$ as follows.
Assume that $S_{i}$ is $\{x_{1},\ldots, x_{r}\}$.
Relabel each vertex, $x_{k}$, as $x_{k}'$, and then
add new vertices $x_{1},\ldots, x_{r}$.
Add an arc from each vertex $x_{k}$ to $x_{k}'$, and let
$S_{i}$ be the set of new vertices, $\{x_{1},\ldots, x_{r}\}$.
Let $T_{i}$ be the set $\{x_{1}',\ldots, x_{r}'\}$.
Add new vertices $v_{i}$ and $v_{j}$.
Attach arcs directed from each vertex in $S_{i}$ to $v_{i}$, and
arcs directed from each vertex in $S_{j}$ to $v_{j}$.

\begin{claim}
\label{decade}
For $i\in \{1,2\}$, let $N_{i}$ be
$L(B_{i},S_{1}\cup S_{2}\cup\{v_{1},v_{2}\},T_{i}\cup \{v_{1},v_{2}\})$.
Then $N_{1}=N_{2}$.
\end{claim}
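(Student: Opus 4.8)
The plan is to prove $N_1=N_2$ by showing that the two matroids have exactly the same independent sets, and in fact to give a description of the independent sets of $N_i$ that refers only to $N$, the partition $\{S_1,S_2\}$, and the correspondence pairing $v_1$ with $S_1$ and $v_2$ with $S_2$, with no reference to which basis was ``lifted'' in the construction of $B_i$. Since both $B_1$ and $B_2$ attach $v_1$ to $S_1$ and $v_2$ to $S_2$, such a description is literally the same for $i=1$ and $i=2$, and the claim follows at once. Throughout I may assume that the sink vertices of $A_i$ (the vertices of the target basis) have no outgoing arcs: deleting such arcs changes neither $N$ nor $N_i$, since any path meeting the target can be truncated there, and in $B_i$ no out-arc of $T_i$ can reach $v_1$ or $v_2$.

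Next I would unwind the linking description of $N_i$. Identify the newly added copies forming $S_i$ with the elements of $S_i\subseteq E(N)$, so that $T_i$ is the basis $B$ of $N$ that was the target in $A_i$, and note $S_j=E(N)\setminus B$. In $B_i$ a vertex of $S_i$ can only travel along its private arc to its copy in $T_i$ or along the new arc to $v_i$; a vertex of $S_j$ either runs through $A_i$ to a vertex of $T_i$ or takes its new arc to $v_j$; and if $v_1$ or $v_2$ lies in the set being linked it occupies itself by a trivial path. Consequently, to link a set $Y$ one routes at most one element of $Y\cap S_i$ to $v_i$ and at most one element of $Y\cap S_j$ to $v_j$ (and none to $v_1$, respectively $v_2$, when that vertex already belongs to $Y$), while the remaining elements of $Y\cap S_j$ must be linked through $A_i$ to a subset of $T_i$ disjoint from the copies claimed by $Y\cap S_i$. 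The crux is to recognise this last condition matroidally: in any graph $G$ with $N=L(G,E(N),B)$ and $B$ a basis, a set $P\subseteq E(N)\setminus B$ is linked to $Q\subseteq B$ if and only if $(B\setminus Q)\cup P$ is a basis of $N$. Using this equivalence in both directions, ``the $S_j$-part of $Y$ links into $T_i$ while avoiding the $S_i$-copies'' becomes exactly ``the retained $S_i$-elements together with the retained $S_j$-elements form an independent set of $N$''. I expect establishing this linking--basis correspondence, together with the bookkeeping that $v_1$ and $v_2$ each absorb at most one path, to be the main obstacle.

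Putting these together yields the promised description: $Y\subseteq S_1\cup S_2\cup\{v_1,v_2\}$ is independent in $N_i$ if and only if one can delete from $Y\cap(S_1\cup S_2)$ at most one element of $S_1$, and at most one element of $S_2$, deleting no element of $S_1$ when $v_1\in Y$ and no element of $S_2$ when $v_2\in Y$, so that what remains is independent in $N$. This criterion mentions only $N$, the sets $S_1,S_2$, and the pairing of $v_1$ with $S_1$ and $v_2$ with $S_2$; the index $i$ and the choice of $A_i$ have vanished. Hence $N_1$ and $N_2$ have the same independent sets, so $N_1=N_2$, as required. As a sanity check one can run the criterion on $N=U_{1,2}$ with $S_1=\{a\}$ and $S_2=\{b\}$, where it correctly produces $N_1=N_2=U_{3,4}$.
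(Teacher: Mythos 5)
Your strategy is genuinely different from the paper's. The paper never characterizes the independent sets of $N_i$ at all: it takes an arbitrary basis $X$ of $N_i$, extracts the two elements $x\in X\cap S_i$ and $y\in X\cap S_j$ that are linked to $v_i$ and $v_j$, observes that $X-\{x,y\}$ is a basis of $N$, and then uses the \emph{other} graph representation $A_j$ to re-link $X$ in $B_j$; equality of $N_1$ and $N_2$ comes from this transfer of bases between the two representations. You instead propose an $i$\dash free description of all independent sets of $N_i$, so that symmetry does the work at the end. That plan is viable, your sink normalization is legitimate (though your stated reason, that no out-arc of $T_i$ can reach $v_1$ or $v_2$, is false --- such an arc can re-enter the $A_i$-part and reach $v_j$ through a vertex of $S_j$; the truncation argument alone is what justifies the normalization), and your linking--basis lemma is correct \emph{given} that normalization (it is false without it). This gives the forward implication: every independent set of $N_i$ satisfies your deletion criterion.

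The genuine gap is in the backward implication, precisely at the spot you call the crux. If $v_j\notin Y$ and an element $b\in Y\cap S_j$ is routed to $v_j$, the path $b\to v_j$ occupies the vertex $b$, which still sits inside the graph $A_i$; so the remaining elements of $Y\cap S_j$ must be linked into $T_i$ by paths avoiding not only the copies claimed by $Y\cap S_i$ but also the vertex $b$ itself. Your reduction silently drops this constraint: the lemma converts ``$(Y\cap(S_1\cup S_2))-\{a,b\}$ is independent in $N$'' into a linking in $A_i$ that may well pass through $b$, and ``linked in $A_i$'' is not the same as ``linked in $A_i-b$'' --- deleting a vertex of the digraph is not a matroid operation on $N$, so no purely matroidal statement about $N$ can encode it directly. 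Consequently ``criterion implies independent in $N_i$'' is unproven as written. The gap is repairable by a short rerouting argument: if the linking supplied by your lemma contains a path $\pi$ from $p$ that passes through $b$, keep only the tail of $\pi$ from $b$ onwards (this links $b$ into $T_i$), send $p$ rather than $b$ to $v_j$, and leave all other paths alone; everything stays vertex-disjoint, so $Y$ is independent in $N_i$ with the witness pair $(a,p)$. With this patch your characterization is true, it yields $N_1=N_2$ by symmetry, and it even gives more than the paper does, namely an explicit description of $N'$. It is worth noting that the paper's own transfer step contains the mirror image of the same subtlety --- the $A_j$-paths from $(X-x)\cap S_i$ could pass through the vertex $x$ that is sent to $v_i$ --- and handles it equally silently; so your instinct about where the difficulty lies is accurate, but a complete proof must actually resolve it.
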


\begin{proof}
Note that $S_{i}\cup \{v_{1},v_{2}\}$ is a basis of $N_{i}$.
Now let $X$ be an arbitrary basis in $N_{i}$, so that $X$
is linked to $T_{i}\cup\{v_{1},v_{2}\}$ in $B_{i}$.
We first consider the case that $\{v_{1},v_{2}\}\cap X=\emptyset$.
There must be a vertex in $X$ that is linked to $v_{i}$.
The only vertices with arcs directed towards $v_{i}$ are
in $S_{i}$, and these vertices have indegree zero.
Therefore there is a vertex, $x$, in $X\cap S_{i}$ that is linked to $v_{i}$.
Each vertex, $x_{k}$, in $(X-x)\cap S_{i}$ must be linked to the corresponding
vertex, $x_{k}'$, in $T_{i}$.
Let $T_{X}$ be $\{x_{k}'\colon x_{k}\in (X-x)\cap S_{i}\}$.
There is a vertex in $X$ linked to $v_{j}$, and by the previous
arguments, it cannot belong to $S_{i}$.
Hence there exists a vertex, $y\in X\cap S_{j}$, that is linked to $v_{j}$.
Since $|(X-x)\cap S_{i}|+|(X-y)\cap S_{j}|=|X-\{x,y\}|=|T_{i}|$, we see that
$|(X-y)\cap S_{j}|=|T_{i}-T_{X}|$, and there is a linking from
$(X-y)\cap S_{j}$ to $T_{i}-T_{X}$.
It follows that $X-\{x,y\}$ can be linked to $S_{i}$ in $A_{i}$, and hence
$X-\{x,y\}$ is a basis of $N$.
Therefore $(X-x)\cap S_{i}$ can be linked to
$S_{j}-(X-y)$ in the graph $A_{j}$.
We use this linking to construct a linking from $X$ to
$T_{j}\cup \{v_{1},v_{2}\}$ in $B_{j}$,
and thereby show that $X$ is a basis in $N_{j}$ also.
Each vertex in $(X-x)\cap S_{i}$ can be linked using
the same path as in $A_{j}$ (relabelling each end vertex, $x_{k}$, as $x_{k}'$).
Any vertex in $(X-y)\cap S_{j}$ is linked to the corresponding vertex in $T_{j}$.
Finally, $x$ is linked to $v_{i}$, and $y$ is linked to $v_{j}$.

Almost identical arguments apply when $|\{v_{1},v_{2}\}\cap X|\ne 0$.
\end{proof}

Now we let $N'$ be the matroid $N_{1}=N_{2}$, as defined
in the statement of~\Cref{decade}.

\begin{claim}
\label{gelato}
$N'/\{v_{1},v_{2}\}=N$.
\end{claim}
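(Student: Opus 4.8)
The plan is to reduce the claim to \Cref{infant} together with a routine comparison of linkings. First I would observe that in $N'=N_{i}$ both $v_{1}$ and $v_{2}$ lie in the intersection of the ground set $S_{1}\cup S_{2}\cup\{v_{1},v_{2}\}$ with the target set $T_{i}\cup\{v_{1},v_{2}\}$. Hence \Cref{infant}, applied twice (contracting $v_{1}$ and then $v_{2}$), gives
\[
N'/\{v_{1},v_{2}\}=L\bigl(B_{i}-\{v_{1},v_{2}\},\,S_{1}\cup S_{2},\,T_{i}\bigr).
\]
It therefore suffices to prove that this last gammoid equals $N=L(A_{i},S_{1}\cup S_{2},S_{i})$, under the natural identification of each new source vertex $x_{k}$ of $S_{i}$ in $B_{i}$ with the original vertex $x_{k}$ of $S_{i}$ in $A_{i}$ (the vertex named $x_{k}'$ in $B_{i}$, which together form $T_{i}$).

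The graph $B_{i}-\{v_{1},v_{2}\}$ is exactly $A_{i}$ with its target set $S_{i}$ renamed to $T_{i}=\{x_{1}',\ldots,x_{r}'\}$, augmented by the new source vertices $x_{1},\ldots,x_{r}$ together with the pendant arcs $x_{k}\to x_{k}'$. The essential feature I would exploit is that each new vertex $x_{k}$ has indegree zero and outdegree one, so any directed path that starts at $x_{k}$ must begin with the arc $x_{k}\to x_{k}'$.

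I would then establish, for each $X\subseteq S_{1}\cup S_{2}$, that $X$ is linked to $T_{i}$ in $B_{i}-\{v_{1},v_{2}\}$ if and only if the identified set $X$ is linked to $S_{i}$ in $A_{i}$. For the forward direction, take a linking of $X$ to $T_{i}$ in $B_{i}-\{v_{1},v_{2}\}$ and delete the initial arc $x_{k}\to x_{k}'$ from each path that starts at a vertex of $X\cap S_{i}$; since the original paths are vertex-disjoint, the truncated paths are vertex-disjoint paths in $A_{i}$ linking $X$ to $S_{i}$ (the vertices renamed $T_{i}$ in $B_{i}$). For the reverse direction, prepend the arc $x_{k}\to x_{k}'$ to each path of a linking in $A_{i}$ that starts at a vertex of $X\cap S_{i}$; because the new source vertices are distinct and appear nowhere else in the graph, disjointness and the endpoint conditions are preserved.

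The only real work — and it is only bookkeeping — is checking that prepending and deleting the pendant arcs preserves both vertex-disjointness and the endpoint conditions, and that the renaming $x_{k}\leftrightarrow x_{k}'$ correctly matches the target $S_{i}$ of $A_{i}$ to the target $T_{i}$ of $B_{i}-\{v_{1},v_{2}\}$. Once this equivalence of linkings is verified, the independent sets of $L(B_{i}-\{v_{1},v_{2}\},S_{1}\cup S_{2},T_{i})$ and of $N=L(A_{i},S_{1}\cup S_{2},S_{i})$ coincide, so the two matroids are equal and the claim follows.
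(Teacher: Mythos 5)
Your proposal is correct and takes essentially the same approach as the paper, whose entire proof of this claim is the single sentence ``This follows with very little effort from \Cref{infant}.'' Your two applications of \Cref{infant} to contract $v_{1}$ and $v_{2}$, followed by the bookkeeping that matches linkings in $B_{i}-\{v_{1},v_{2}\}$ with linkings in $A_{i}$ across the pendant arcs $x_{k}\to x_{k}'$, is exactly the routine verification the paper leaves to the reader.
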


\begin{proof}
This follows with very little effort from \Cref{infant}.
\end{proof}

To construct $C_{i}$ from $B_{i}$, we add three new vertices,
$w_{i}$, $c_{i}$, and $d_{i}$.
Draw arcs from $c_{i}$ to $w_{i}$ and $v_{j}$.
Draw arcs from $d_{i}$ to $w_{i}$ and $v_{i}$.
Now add $C$, a set of two new vertices, and arcs from the vertices in
$C$ to $c_{i}$ and to all vertices in $T_{i}$.
Finally, add $D$, a set of $r+1$ new vertices, and arcs from the vertices in
$D$ to $d_{i}$ and all the vertices in $T_{i}$.
\Cref{fig} shows a schematic representation of $C_{i}$.

\begin{figure}[htb]
\centering
\includegraphics{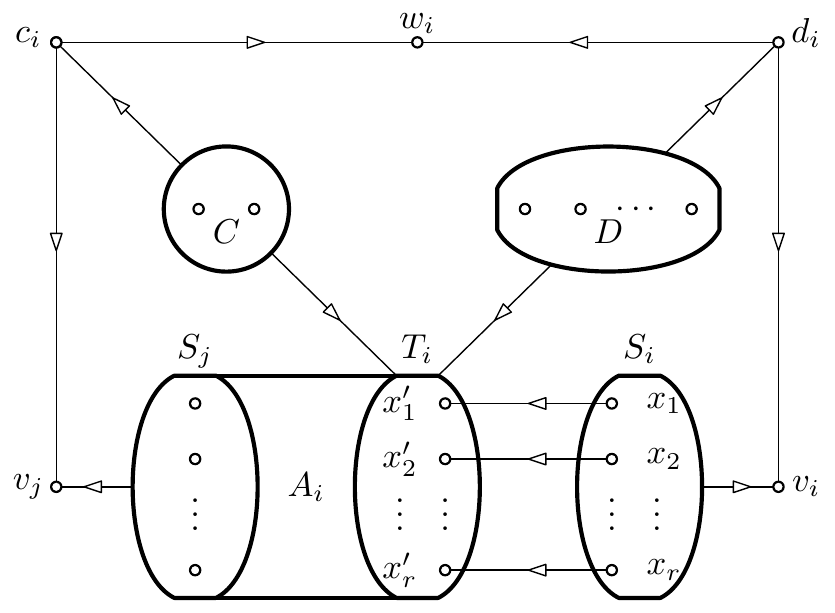}
\caption{The construction of $C_{i}$.}
\label{fig}
\end{figure}

Let $M_{i}'$ be the gammoid
$L(C_{i},S_{1}\cup S_{2}\cup C\cup D\cup\{v_{1},v_{2}\},
T_{i}\cup \{v_{1},v_{2},w_{i}\})$.

\begin{claim}
\label{frypan}
Let $X$ be a subset of $E(M_{i}')$ such that
$X\cap (C\cup D)\ne \emptyset$.
Then $X$ is a non-spanning circuit of $M_{i}'$ if and only if
$|X|=r+3$ and $X$ is a subset of one of $C\cup D$, $S_{i}\cup C\cup v_{i}$,
$S_{i}\cup D\cup v_{i}$, $S_{j}\cup C\cup v_{j}$, or $S_{j}\cup D\cup v_{j}$.
\end{claim}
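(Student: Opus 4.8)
The plan is to reduce everything to the reachability structure of $C_i$, exploiting that $v_i$, $v_j$ and $w_i$ have very restricted in-neighbourhoods. From the construction one reads off: the only arcs into $w_i$ come from $c_i$ and $d_i$; the only arcs into $c_i$ come from $C$ and into $d_i$ from $D$; the only arcs into $v_i$ come from $S_i$ and $d_i$, and into $v_j$ from $S_j$ and $c_i$; and every vertex of $C\cup D$ has a direct arc to each vertex of $T_i$. Since $c_i$ and $d_i$ are single vertices, each carries at most one path of any linking, and every path through one of them ends at a special vertex rather than in $T_i$. I would also record that, through the matching arcs and the embedded copy of $A_i$, a subset of $S_i\cup S_j$ links onto $T_i$ exactly when the corresponding subset is independent in $N$ (compare the proof of \Cref{decade}), whereas the elements of $C\cup D$ reach every vertex of $T_i$ freely; in particular $S_i$ links onto $T_i$ along the matching. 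Linking $S_i$, $v_i$, one element of $D$ (to $w_i$ through $d_i$), and one element of $S_j$ (to $v_j$ directly) then shows $r(M_i')=r+3$.

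For the reverse implication I would check that each listed set cannot be linked onto the full target $T_i\cup\{v_1,v_2,w_i\}$, purely by bookkeeping at $c_i$ and $d_i$: in $C\cup D$ all three of $v_i,v_j,w_i$ would have to be reached through $\{c_i,d_i\}$; in $S_i\cup C\cup v_i$ both $w_i$ and $v_j$ are forced through $c_i$; in $S_j\cup D\cup v_j$ both $w_i$ and $v_i$ are forced through $d_i$; in $S_j\cup C\cup v_j$ the vertex $v_i$ has no available source; and in $S_i\cup D\cup v_i$ the vertex $v_j$ has no available source. Each set therefore has rank at most $r+2$. To upgrade this to ``every $(r+3)$-subset is a circuit'', I would, after deleting any one element, link the remaining $r+2$ elements onto $T_i$ together with two suitable special vertices. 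The one point needing care is that $M_i'|(S_i\cup D\cup v_i)$ and $M_i'|(S_j\cup D\cup v_j)$ are uniform of rank $r+2$; here the flexibility of the $D$-elements absorbs whichever targets the $S_i$- or $S_j$-elements leave uncovered, using that $S_i$ and $S_j$ are bases of $N$, so that each of their subsets links into $T_i$.

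For the forward implication, let $X$ be a non-spanning circuit meeting $C\cup D$, so that $r(X)=|X|-1\le r+2$ and hence $|X|\le r+3$. Minimality forces $X\cap(S_i\cup S_j)$ to be independent in $N$, since any dependence there would already furnish a circuit disjoint from $C\cup D$; consequently $T_i$ can be saturated by $X\cap(S_i\cup S_j)$ together with enough elements of $C\cup D$, so the only possible obstruction to a full linking lies among $v_i,v_j,w_i$. I would first force $|X|=r+3$ by an augmenting argument: if $|X|\le r+2$ then, fixing $e\in X\cap(C\cup D)$ and a linking of $X-e$, at least two targets are free, and since $e$ reaches all of $T_i$ directly and has access to $c_i$ or $d_i$, one can reroute to accommodate $e$, contradicting dependence. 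Given $|X|=r+3$, the bottleneck facts leave exactly five ways for the special vertices to defeat a full linking, each pinning the support of $X$: $v_i$ has no source ($X\subseteq S_j\cup C\cup v_j$); $v_j$ has no source ($X\subseteq S_i\cup D\cup v_i$); $w_i$ and $v_j$ both need $c_i$, whence $X\cap(S_j\cup D)=\emptyset$ ($X\subseteq S_i\cup C\cup v_i$); $w_i$ and $v_i$ both need $d_i$ ($X\subseteq S_j\cup D\cup v_j$); or $v_i$ takes $d_i$ and $v_j$ takes $c_i$, blocking $w_i$ ($X\subseteq C\cup D$). This is precisely the listed collection.

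I expect the forward implication to be the crux, and in particular the two max-flow steps: the augmenting-path rerouting that pins $|X|$ to $r+3$, and the guarantee that $T_i$ can be saturated without commandeering the vertices needed for $v_i,v_j,w_i$. Both rest on the interplay between $S_i\cup S_j$ meeting $T_i$ through a copy of $N$ and $C\cup D$ reaching $T_i$ freely, and I would isolate the requisite disjoint-linking statements once, using that $S_1$ and $S_2$ are bases of $N$ together with basis exchange.
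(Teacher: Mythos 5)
Your reverse direction (every $(r+3)$\dash element subset of the five listed sets is a non-spanning circuit) is sound, and is in fact worked out in more detail than in the paper, which dismisses that direction as routine. The genuine gap is in the forward direction, beginning with the sentence ``Minimality forces $X\cap(S_i\cup S_j)$ to be independent in $N$, since any dependence there would already furnish a circuit disjoint from $C\cup D$.'' That justification is false: a circuit of $N$ is \emph{not} a dependent set of $M_i'$. Since $S_1$ and $S_2$ are bases of $N$, every circuit $Z$ of $N$ meets both $S_1$ and $S_2$; so one can link $Z-z$ into $T_i$ (it is independent in $N$) and send some $z\in Z\cap S_i$ along its arc to $v_i$, and these paths are disjoint. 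Hence $Z$ is independent in $N'=M_i'|(S_1\cup S_2\cup\{v_1,v_2\})$. This is precisely why \Cref{gelato} says $N'/\{v_1,v_2\}=N$ rather than $N'|(S_1\cup S_2)=N$. The dependent sets of $M_i'$ avoiding $C\cup D$ are the dependent sets of $N'$, and a subset $Y$ of $S_1\cup S_2$ with $|Y|-r_N(Y)$ equal to $1$ or $2$ is typically still independent in $N'$, because up to two of its elements can escape to $v_i$ and $v_j$. Consequently, circuits $X$ for which $X\cap(S_1\cup S_2)$ is $N$\dash dependent with small deficiency are live candidates that your obstruction analysis never addresses; ruling them out is part of the proof and does not follow from circuit minimality.

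The second gap is that the two ``max-flow steps'' you defer are not routine bookkeeping: they are the entire content of the paper's argument, and your one-sentence versions fail as stated. For the step forcing $|X|=r+3$, consider $e,e'\in X\cap C$ with $X-e$ linked onto $L=T_i\cup\{v_j\}$, where $e'$ reaches $v_j$ through $c_i$. The free targets are $v_i$ and $w_i$, and $e$ can reach neither: every vertex of $T_i$ is occupied, $c_i$ is in use, and $e$ has no path to $v_i$ at all. So ``reroute to accommodate $e$'' cannot mean merely adding a path for $e$; one must displace existing paths (move a vertex linked to some $t\in T_i$ onto $v_j$ or $v_i$, move $e'$ from $v_j$ to $w_i$, and give $t$ to $e$), and whether a suitable displacement always exists, in every configuration, is exactly what the paper's case analysis over $L$ (which of $v_i,v_j,w_i$ lie in $L$, and through which of $c_i,d_i$ they are reached) establishes, via repeated rerouting arguments of this multi-step kind. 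The same issue affects the claimed exhaustiveness of your five bottleneck configurations, which moreover leans on the unproved independence claim above. In short, your skeleton --- bottleneck counts at $c_i$ and $d_i$ plus rerouting --- is the same mechanism the paper uses, and your final list of cases is correct, but the crux that you yourself identify is left at the level of assertion, and the one reduction you do argue for rests on a false statement.
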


\begin{proof}
It is straightforward to check that all the sets described in the statement
are non-spanning circuits in $M_{i}'$.

Let $X$ be a non-spanning circuit of $M_{i}'$ such that
$X\cap (C\cup D)\ne \emptyset$.
Let $x$ be an element in $X\cap (C\cup D)$.
Since $X-x$ is independent in $M_{i}'$, it is linked to a subset,
$L$, of $T_{i}'\cup\{v_{1},v_{2},w_{i}\}$ in $C_{i}$.
As $X$ is non-spanning, $|L|<r+3$.
It must be the case that $T_{i}\subseteq L$, for otherwise
we can link $x$ to a vertex in $T_{i}-L$, and thus link
$X$ to $T_{i}\cup\{v_{1},v_{2},w_{i}\}$ in $C_{i}$, a contradiction
as $X$ is dependent in $M_{i}'$.

If $X\cap (S_{i}\cup S_{j})=\emptyset$, then $X$
is a subset of $C\cup D\cup\{v_{1},v_{2}\}$.
We can assume $X$ contains $v_{1}$ or $v_{2}$, since otherwise
$X\subseteq C\cup D$ and we are done, as $C\cup D$ is a circuit.
Assume that $v_{j}$ is in $X$.
Since $v_{j}$ has outdegree zero, it follows that $v_{j}$ is linked
to itself.
Similarly, if $v_{i}$ is in $X$, it is linked to itself.
But if $v_{i},v_{j}\in X$, then $w_{i}\notin L$, as $|L|< r+3$.
Hence $x$ can be linked to $w_{i}$ via $c_{i}$ or $d_{i}$,
showing that $X$ is independent.
Therefore $X\cap \{v_{i},v_{j}\}=\{v_{j}\}$.
Since $C\cup D$ is a circuit, we assume that
$|X\cap (C\cup D)| \leq r+2$.
It therefore follows easily that $X\cap (C\cup D)$ can be linked to
$T_{i}\cup \{v_{i},w_{i}\}$, and hence $X$ is independent.
We similarly reach a contradiction if we assume that $v_{i}$ is in $X$.
Therefore we must conclude that $X$ contains elements from
either $S_{i}$ or $S_{j}$.

Assume that $X$ contains a vertex $s\in S_{i}$.
If $v_{i}\notin L$, then $s$ is linked to a vertex
$t\in T_{i}$.
But now we can link $s$ to $v_{i}$, and link $x$ to $t$, and
thus link $X$ to $T_{i}\cup\{v_{1},v_{2},w_{i}\}$.
This shows that if $X$ contains an element of $S_{i}$, then $L$ contains
$v_{i}$.
A symmetric argument shows that if $X$ contains an element of $S_{j}$, then
$L$ contains $v_{j}$.

Assume that $v_{i},v_{j}\in L$.
This means that $L=T_{i}\cup\{v_{i},v_{j}\}$, since $|L|<r+3$.
First assume that $x$ is in $C$.
Since $X$ is dependent, $x$ cannot be linked to $w_{i}$ without
using one of the vertices used to link $X-x$ to $L$.
This means that there is a vertex, $y\in C-x$, such that the path
that starts with $y$ uses $c_{i}$.
It follows that $y$ is linked to $v_{j}$.
Since $y$ is linked to $v_{j}$,
we deduce that $v_{j}$ is not in $X$.
If there is an element $s$ in $X\cap S_{j}$, then $s$ is linked to
a vertex $t\in T_{i}$.
We link $y$ to $w_{i}$ (via $c_{i}$), link $s$ to $v_{j}$,
and link $x$ to $t$.
This leads to a contradiction, so $X\cap (S\cup v_{j})=\emptyset$.
Now assume that there is an element $X\cap D$.
If no vertex in $X\cap D$ is linked to $v_{i}$, then
some $z\in X\cap D$ is linked to a vertex $t\in T_{i}$.
We can link $z$ to $w_{i}$ (via $d_{i}$), and then link $x$ to $t$,
demonstrating that $X$ is independent.
Hence there is a vertex $z\in X\cap D$ that is linked to $v_{i}$
via $d_{i}$, implying $v_{i}\notin X$.
If $s$ is an element in $X\cap S_{i}$, then $s$ is linked to $t\in T_{i}$,
and we can reroute by linking $z$ to $w_{i}$ (via $d_{i}$), linking $s$ to $v_{i}$,
and linking $x$ to $t$.
Therefore there is no element in $X\cap (S_{i}\cup v_{i})$, so
$X\subseteq C\cup D$.
Since $C\cup D$ is a circuit, it follows that $X=C\cup D$.
Therefore we assume there is no element in $X\cap D$, so
$X\subseteq C\cup S_{i}\cup v_{i}$.
As $|L|=r+2$, it follows that $|X|=r+3$, and we are done.
A symmetric argument shows that if $x$ is in $D$, then there is an
element $y\in D-x$ linked to $v_{i}$ via $d_{i}$, and either
$X=C\cup D$, or $|X|=r+3$ and $X\subseteq S_{i}\cup D\cup v_{i}$.
Therefore we can now assume $L$ does not contain both
$v_{i}$ and $v_{j}$.

Assume that $L$ does not contain $w_{i}$.
By earlier paragraphs, exactly one of the following holds:
$X$ contains an element in $S_{i}$ and $L=T_{i}\cup v_{i}$, or
$X$ contains an element in $S_{j}$ and $L=T_{i}\cup v_{j}$.
We will assume that the former case holds.
If $x$ is in $C$, then $x$ can be linked to $w_{i}$ via $c_{i}$.
This contradiction shows that $x$ is in $D$.
If there is no element in $(X-x)\cap D$ that is linked to $v_{i}$,
then we can link $x$ to $w_{i}$ via $d_{i}$.
Therefore some vertex $y\in (X-x)\cap D$ is linked to
$v_{i}$ via $d_{i}$.
Let $s$ be a vertex in $X\cap S_{i}$.
Then $s$ is linked to a vertex $t\in T_{i}$.
We reroute by linking $y$ to $w_{i}$ via $d_{i}$,
linking $s$ to $v_{i}$, and linking $x$ to $t$.
We reach a symmetric contradiction if we assume that
$L=T_{i}\cup v_{j}$.
Therefore we can now assume, using earlier
paragraphs, that $L=T_{i}\cup \{v_{i},w_{i}\}$
or $L=T_{i}\cup \{v_{j},w_{i}\}$.
We will assume that the former case holds.

By an earlier paragraph, we see that $X$ contains no vertex in $S_{j}$.
If $x$ is in $C$, then $x$ can be linked to $v_{j}$, which leads
to a contradiction.
Therefore $x$ is in $D$.
Assume that $C\cap X\ne \emptyset$.
If there is no vertex in $C\cap X$ that is linked to $w_{i}$,
then we choose a vertex $z\in C\cap X$, and we let
$t$ be the vertex in $T_{i}$ that $z$ is linked to.
We reroute by linking $z$ to $v_{j}$ via $c_{i}$, and linking
$x$ to $t$.
Therefore we can assume that $z\in C\cap X$ is linked
to $w_{i}$ via $c_{i}$.
If there is no vertex in $D\cap (X-x)$ that is linked to $v_{i}$,
then we can link $z$ to $v_{j}$ via $c_{i}$, and link $x$ to $w_{i}$
via $d_{i}$.
Thus $y\in D\cap (X-x)$ is linked to $v_{i}$ via $d_{i}$.
Now choose a vertex $s\in S_{i}\cap X$, and let $t$
be the vertex in $T_{i}$ that $s$ is linked to.
We link $z$ to $v_{j}$ via $c_{i}$,
link $y$ to $w_{i}$ via $d_{i}$,
link $s$ to $v_{i}$, and link $x$ to $t$.
This is impossible, so $X\cap C$ is empty.
Now $X$ is a subset of $S_{i}\cup D\cup v_{i}$,
and $|X|=|L|+1=r+3$, so we are done.
We reach a symmetric conclusion if we assume that
$L$ is $T_{i}\cup \{v_{j},w_{i}\}$.
\end{proof}

\begin{claim}
\label{blouse}
$M_{1}'=M_{2}'$
\end{claim}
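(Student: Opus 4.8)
The plan is to invoke the general principle that a matroid is determined by its rank together with its family of non-spanning circuits, and then to match up the non-spanning circuits of $M_1'$ and $M_2'$ using \Cref{frypan} and \Cref{decade}. First I would record that $M_1'$ and $M_2'$ have the same ground set, namely $E=S_1\cup S_2\cup C\cup D\cup\{v_1,v_2\}$, where $C$ and $D$ are common to both constructions, and that they have the same rank. Since the target set $T_i\cup\{v_1,v_2,w_i\}$ has $r+3$ elements, each $M_i'$ has rank at most $r+3$; and \Cref{frypan} exhibits $C\cup D$, of size $|C|+|D|=2+(r+1)=r+3$, as a non-spanning circuit, so the rank is exactly $r+3$ in both matroids.

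Next I would establish the principle itself: a matroid of rank $\rho$ is determined by $\rho$ together with its non-spanning circuits. Indeed a set $X$ is independent if and only if $|X|\le\rho$ and $X$ contains no non-spanning circuit, because any circuit contained in a set of size at most $\rho$ has size at most $\rho$ and is therefore non-spanning. It thus suffices to show that $M_1'$ and $M_2'$ have the same non-spanning circuits, and I would split these into two families according to whether they meet $C\cup D$. Those that meet $C\cup D$ are described by \Cref{frypan}; the key observation is that the list of admissible supersets there, namely $C\cup D$, $S_i\cup C\cup v_i$, $S_i\cup D\cup v_i$, $S_j\cup C\cup v_j$, and $S_j\cup D\cup v_j$, is invariant under interchanging the roles of $i$ and $j$. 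Hence the non-spanning circuits of $M_1'$ and of $M_2'$ that meet $C\cup D$ coincide.

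For the non-spanning circuits disjoint from $C\cup D$, which are exactly the subsets of $S_1\cup S_2\cup\{v_1,v_2\}$, I would show that $M_i'\restriction(S_1\cup S_2\cup\{v_1,v_2\})=N_i$. This reduces to verifying that, inside $C_i$, the gadget vertices $w_i$, $c_i$, $d_i$ and the sets $C$, $D$ can neither lie on nor supply a new endpoint for any directed path between vertices of $S_1\cup S_2\cup\{v_1,v_2\}$. The vertices of $C$ and $D$ are sources, so $c_i$ and $d_i$, and therefore $w_i$, are unreachable from the old vertices; in particular the extra target $w_i$ is never usable by a linking of old vertices. Consequently the linkable subsets of $S_1\cup S_2\cup\{v_1,v_2\}$ are the same in $C_i$ as in $B_i$, which gives $M_i'\restriction(S_1\cup S_2\cup\{v_1,v_2\})=N_i$. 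Since \Cref{decade} gives $N_1=N_2$, the two restrictions agree, so the non-spanning circuits disjoint from $C\cup D$ also coincide. Combining the two families with the already-established equality of ranks yields $M_1'=M_2'$.

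The main obstacle is the restriction argument in the last paragraph: confirming that enlarging the graph from $B_i$ to $C_i$, and the target from $T_i\cup\{v_1,v_2\}$ to $T_i\cup\{v_1,v_2,w_i\}$, leaves the linkability of subsets of $S_1\cup S_2\cup\{v_1,v_2\}$ unchanged. Everything else is either bookkeeping or a direct appeal to \Cref{frypan} and \Cref{decade}.
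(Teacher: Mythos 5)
Your proposal is correct and follows essentially the same route as the paper: both establish $r(M_1')=r(M_2')=r+3$, split the non-spanning circuits according to whether they meet $C\cup D$, handle those meeting $C\cup D$ via the $i$/$j$-symmetry of the list in \Cref{frypan}, and handle the rest by using the indegree-zero (source) structure of the gadget vertices to show $M_i'|(S_1\cup S_2\cup\{v_1,v_2\})=N'$ and then invoking \Cref{decade}. The only difference is cosmetic: you spell out the principle that rank plus non-spanning circuits determine a matroid, which the paper leaves implicit in ``The claim follows.''
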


\begin{proof}
Clearly $r(M_{1}')=r(M_{2}')=r+3$.
We consider the restriction $M_{i}'|(S_{i}\cup S_{j}\cup\{v_{i},v_{j}\})$.
Since vertices in $C\cup D$ have indegree zero in $C_{i}$,
we may as well delete these vertices.
Now $c_{i}$ and $d_{i}$ have indegree zero, so we delete these vertices
as well, and then delete $w_{i}$, since it is now isolated.
This argument shows that
$M_{i}'|(S_{i}\cup S_{j}\cup\{v_{i},v_{j}\})=N'$.
Let $X$ be a non-spanning circuit of $M_{i}'$.
If $X\cap (C\cup D)=\emptyset$, then $X$ is a circuit
of
\[M_{i}'|(S_{i}\cup S_{j}\cup\{v_{i},v_{j}\})=N'
=M_{j}'|(S_{i}\cup S_{j}\cup\{v_{i},v_{j}\}),\]
so $X$ is also a circuit of $M_{j}'$.
On the other hand, \Cref{frypan} shows that $M_{1}'$
and $M_{2}'$ agree in the non-spanning circuits
that intersect $C\cup D$.
The claim follows.
\end{proof}

Let $M'$ be the matroid $M_{1}'=M_{2}'$.
We obtain the directed graph $D_{i}$ from $C_{i}$
by adding arcs from the vertices in $C$ to $v_{j}$.
Let $M_{i}''$ be the gammoid
$L(D_{i},S_{i}\cup S_{j}\cup C\cup D\cup \{v_{1},v_{2}\},
T_{i}\cup \{v_{1},v_{2},w_{i}\})$.

\begin{claim}
\label{cutter}
Let $X$ be a subset of $E(M_{i}'')$ such that
$X\cap (C\cup D)\ne \emptyset$.
Then $X$ is a non-spanning circuit of $M_{i}''$ if and only if
$|X|=r+3$ and $X$ is a subset of one of
$S_{j}\cup C\cup v_{j}$, $S_{i}\cup D\cup v_{i}$, or
$S_{j}\cup D\cup v_{j}$.
\end{claim}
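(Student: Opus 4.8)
The plan is to mirror the proof of \Cref{frypan} as closely as possible, since $D_i$ differs from $C_i$ only by the addition of arcs from the vertices in $C$ to $v_j$. The effect of these new arcs is precisely to make the set $S_i\cup C\cup v_i$ \emph{no longer} a non-spanning circuit, which explains why it is absent from the list in the statement. So the forward direction of the proof of \Cref{frypan} will largely survive, and the main work is to track where the new arcs let us reroute a linking in $D_i$ that was blocked in $C_i$.

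First I would handle the easy direction: verify directly that each of the three listed sets, $S_j\cup C\cup v_j$, $S_i\cup D\cup v_i$, and $S_j\cup D\cup v_j$, is a non-spanning circuit of $M_i''$. For these three sets the new arcs from $C$ to $v_j$ are harmless — in $S_i\cup D\cup v_i$ and $S_j\cup D\cup v_j$ there are no vertices of $C$ present, and in $S_j\cup C\cup v_j$ the vertices of $C$ can already reach $v_j$ through $c_i$, so the dependency structure is unchanged from $C_i$. I would also note that $C\cup D$ is no longer a circuit of $M_i''$: with the extra arcs, each vertex of $C$ can be linked directly to $v_j$, so $C\cup D$ becomes linkable to $T_i\cup\{v_i,v_j,w_i\}$ and hence independent. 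This is the crucial difference from \Cref{frypan} and accounts for every change in the list.

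For the reverse direction I would replay the case analysis of \Cref{frypan}, letting $X$ be a non-spanning circuit meeting $C\cup D$, picking $x\in X\cap(C\cup D)$, and linking $X-x$ to a set $L$ with $T_i\subseteq L$ and $|L|<r+3$. The arguments showing that $X$ meeting $S_i$ forces $v_i\in L$, and $X$ meeting $S_j$ forces $v_j\in L$, transfer unchanged. The case $X\cap(S_i\cup S_j)=\emptyset$ now collapses: since $C\cup D$ is independent and any $X\subseteq C\cup D\cup\{v_1,v_2\}$ can be linked using the new $C$-to-$v_j$ arcs, no such circuit exists, so $X$ must meet $S_i$ or $S_j$. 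The subcase $v_i,v_j\in L$ is where the new arcs bite hardest: when $x\in C$, the old argument produced a circuit $C\cup D$ or one contained in $C\cup S_i\cup v_i$, but now the $C$-to-$v_j$ arc gives an extra rerouting that eliminates the $C\cup S_i\cup v_i$ possibility, leaving only outcomes already on the new list. The remaining subcases ($L$ omits $w_i$, and $L=T_i\cup\{v_i,w_i\}$ or $T_i\cup\{v_j,w_i\}$) I would carry through as before, inserting the new arc as an additional rerouting option wherever a vertex of $C\cap X$ needs to reach $v_j$.

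The hard part will be bookkeeping rather than any new idea: I must check, in each subcase of the replayed argument, that the addition of the $C$-to-$v_j$ arcs does not accidentally create a linking that would make one of the three listed sets independent (it must not), while confirming that it \emph{does} destroy exactly the set $S_i\cup C\cup v_i$ and the set $C\cup D$. The delicate point is the symmetry: \Cref{frypan} had a clean $i\leftrightarrow j$ symmetry, but the new arcs break it (they go to $v_j$, not $v_i$), so I cannot simply invoke symmetric arguments and must verify each asymmetric subcase explicitly, particularly those in which $x\in C$ versus $x\in D$ now behave differently.
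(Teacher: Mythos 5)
Your outline is essentially sound, but it takes a genuinely different and far more laborious route than the paper. You propose to replay the entire case analysis of \Cref{frypan} inside $D_{i}$, checking subcase by subcase how the new arcs from $C$ to $v_{j}$ alter the reroutings; as you yourself note, the broken $i\leftrightarrow j$ symmetry forces every subcase to be re-verified by hand. The paper avoids all of this with a single monotonicity observation: every arc of $C_{i}$ is an arc of $D_{i}$, so any linking in $C_{i}$ survives in $D_{i}$, and hence any set dependent in $M_{i}''$ is dependent in $M_{i}'$. Thus if $X$ is a non-spanning circuit of $M_{i}''$ meeting $C\cup D$, then $X$ contains a non-spanning circuit $Y$ of $M_{i}'$; moreover $Y$ must meet $C\cup D$, for otherwise $Y$ is a circuit of $N'=M_{i}'|(S_{1}\cup S_{2}\cup\{v_{1},v_{2}\})=M_{i}''|(S_{1}\cup S_{2}\cup\{v_{1},v_{2}\})$ (the new arcs leave $C$, whose vertices have indegree zero, so no path starting in $S_{1}\cup S_{2}\cup\{v_{1},v_{2}\}$ can use them), making $Y$ a dependent proper subset of the circuit $X$. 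By \Cref{frypan}, $|Y|=r+3$, so $X=Y$ is an $(r+3)$-element subset of one of the five sets listed there; the two independence checks you already carry out --- that the new arcs make $C\cup D$ and $S_{i}\cup C\cup v_{i}$ independent in $M_{i}''$ --- strike the two dead families from that list, and the easy verification of the three surviving families finishes the proof. In short, you already possess every ingredient of the paper's short argument (the three sets remaining circuits, and exactly $C\cup D$ and $S_{i}\cup C\cup v_{i}$ becoming independent); what you are missing is the observation that adding arcs can only create linkings, never destroy them, which collapses your heavy bookkeeping into a one-paragraph reduction to \Cref{frypan}. Your replay, executed with the care you describe, would still prove the claim, but at the cost of essentially duplicating the longest proof in the paper.
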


\begin{proof}
It is easy to check that the sets in the statement of the
claim are non-spanning circuits of $M_{i}''$.
If $X$ is a non-spanning circuit in $M_{i}''$ that intersects
$C\cup D$, then $X$ is also dependent in $M_{i}'$,
as every arc in $M_{i}'$ is also an arc in $M_{i}''$.
Now we can deduce that $X$ contains one of the
sets in the statement of \Cref{frypan}.
However, it is easy to check that $C\cup D$
and $S_{i}\cup C\cup v_{i}$
are independent in $M_{i}''$.
The claim follows.
\end{proof}

The next claim is easy to verify.

\begin{claim}
\label{velcro}
$C\cup D$ is a circuit-hyperplane of $M'$.
\end{claim}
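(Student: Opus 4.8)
The plan is to verify two properties of the set $C\cup D$ in the matroid $M'=M_1'=M_2'$: first that it is a circuit, and second that it is a hyperplane. Throughout I work in $M'$ using whichever representation $M_i'$ is convenient, since \Cref{blouse} tells us these coincide. Recall that $r(M')=r+3$ and that $|C|=2$ while $|D|=r+1$, so $|C\cup D|=r+3=r(M')$; thus $C\cup D$ has exactly the right size to be a basis, and showing it is a circuit-hyperplane amounts to showing it is dependent but that every proper subset together with enough outside elements still spans.

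First I would confirm that $C\cup D$ is a circuit. This is immediate from \Cref{frypan}: taking $X=C\cup D$, we have $X\cap(C\cup D)\ne\emptyset$ and $X$ is a subset of $C\cup D$ with $|X|=r+3$, so the characterisation there lists $C\cup D$ among the non-spanning circuits. (Indeed the proof of \Cref{frypan} repeatedly invokes the fact that $C\cup D$ is a circuit, so this is already established.)

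Next I would show $C\cup D$ is a hyperplane, i.e. that it is a maximal non-spanning flat, equivalently that $M'$ has rank $r+3$, that $r(C\cup D)=r+2$, and that adding any element $e\notin C\cup D$ gives a spanning set. Since $C\cup D$ is a circuit of size $r+3$ it has rank $r+2$, so it is non-spanning. For the flat condition, I would argue that no element outside $C\cup D$ lies in its closure: given $e\in E(M')\setminus(C\cup D)$, I would exhibit a linking showing $(C\cup D)\setminus\{f\}\cup\{e\}$ still has rank $r+3$ for a suitable $f$, or more directly that $e$ can be linked simultaneously with a basis of $C\cup D$ to the target set $T_i\cup\{v_1,v_2,w_i\}$. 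Concretely, the elements of $C\cup D$ can be linked to $T_i\cup\{w_i\}$ (via $c_i$ and $d_i$) while leaving $v_i,v_j$ as targets available for any outside element $e\in S_i\cup S_j\cup\{v_1,v_2\}$; this shows $(C\cup D)\cup\{e\}$ is spanning, so $e$ is not in the closure of $C\cup D$.

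The part requiring the most care will be checking the flat condition uniformly across all the different types of element $e$ outside $C\cup D$ (elements of $S_i$, of $S_j$, and the vertices $v_1,v_2$), since each requires a slightly different routing of the disjoint paths through $c_i$, $d_i$, and $w_i$ to show that a basis of $C\cup D$ together with $e$ can be linked into the target set. However, this is precisely the kind of rerouting argument carried out in detail in the proof of \Cref{frypan}, so the verifications are routine given that machinery — which is why the statement is flagged as easy to verify.
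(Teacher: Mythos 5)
The paper gives no argument for this claim at all (it is introduced with ``The next claim is easy to verify''), so the only question is whether your verification is sound. Your overall plan is the intended one, and the circuit half is fine: \Cref{frypan} already lists $C\cup D$ among the non-spanning circuits, and a circuit of size $r+3$ in the rank-$(r+3)$ matroid $M'$ has rank $r+2$. The problem is your ``concrete'' routing for the flat condition. You claim the elements of $C\cup D$ can be linked to $T_{i}\cup\{w_{i}\}$ via $c_{i}$ and $d_{i}$ while leaving \emph{both} $v_{i}$ and $v_{j}$ free for the outside element $e$. That is impossible: $|T_{i}\cup\{w_{i}\}|=r+1$, and since the paths of a linking are vertex-disjoint, only one path can terminate at $w_{i}$. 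So at most $r+1$ elements of $C\cup D$ can be linked into $T_{i}\cup\{w_{i}\}$, and adding a path from $e$ to $v_{i}$ or $v_{j}$ yields at most $r+2$ disjoint paths. This only certifies $r_{M'}((C\cup D)\cup e)\geq r+2$, which equals $r_{M'}(C\cup D)$ and therefore does not show $e\notin \mathrm{cl}(C\cup D)$. In fact every $(r+2)$-element independent subset of $C\cup D$ must use exactly one of $v_{i},v_{j}$ as a path terminus; you can never keep both free.

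The fix is the asymmetric routing that your final paragraph gestures at but never carries out: which of $v_{i},v_{j}$ is reserved for $e$ must depend on $e$. If $e\in S_{i}\cup\{v_{i}\}$, link $e$ to $v_{i}$ (via its arc, or to itself when $e=v_{i}$), link a vertex of $C$ via $c_{i}$ to $v_{j}$, link a vertex of $D$ via $d_{i}$ to $w_{i}$, and link $r$ of the remaining $r+1$ vertices of $C\cup D$ bijectively to $T_{i}$. If $e\in S_{j}\cup\{v_{j}\}$, link $e$ to $v_{j}$, a vertex of $C$ via $c_{i}$ to $w_{i}$, a vertex of $D$ via $d_{i}$ to $v_{i}$, and $r$ remaining vertices to $T_{i}$. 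Either way one obtains $r+3$ disjoint paths, so $(C\cup D)\cup e$ is spanning for every $e\notin C\cup D$, and $C\cup D$ is a hyperplane. With this correction your argument is complete and is presumably exactly the verification the paper has in mind.
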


We let $M$ be the matroid produced from $M'$ by relaxing
the circuit-hyperplane $C\cup D$.
The rest of the proof involves demonstrating that $M$ is an
excluded minor for the class of gammoids and that $M$ has an
$N$\dash minor.

\begin{claim}
\label{skewer}
$M$ has an $N$\dash minor.
\end{claim}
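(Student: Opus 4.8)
The plan is to exhibit $N$ as a minor of $M$ by combining two facts that are already in hand. \Cref{gelato} gives $N = N'/\{v_1,v_2\}$, while the argument proving \Cref{blouse} shows that $N' = M'|(S_1\cup S_2\cup\{v_1,v_2\})$; since the ground set of $M'$ is $S_1\cup S_2\cup C\cup D\cup\{v_1,v_2\}$, this restriction is exactly $M'\ba(C\cup D)$. Hence
\[
N = (M'\ba(C\cup D))/\{v_1,v_2\},
\]
so $N$ is certainly a minor of $M'$. The only genuine difficulty is that $M$ is not $M'$ itself but its relaxation at the circuit-hyperplane $C\cup D$ (\Cref{velcro}), so I must transfer this minor from $M'$ across to $M$.

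The mechanism for the transfer is the observation that relaxing a circuit-hyperplane alters the matroid only on subsets that contain the whole of that circuit-hyperplane. Concretely, the single difference between $M'$ and $M$ is that $C\cup D$ is dependent in $M'$ but is a basis of $M$; every other subset of the common ground set retains its independence status. Therefore, for any element $e\in C\cup D$, no subset of the remaining ground set is equal to $C\cup D$, and so $M\ba e = M'\ba e$.

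With this identity in hand, I would first delete a single element $e\in C\cup D$; such an element exists since $|C\cup D| = r+3$. Then $M\ba e = M'\ba e$, and from this matroid I recover $N$ by deleting the remaining elements $(C\cup D)-e$ and contracting $\{v_1,v_2\}$, yielding $(M'\ba(C\cup D))/\{v_1,v_2\} = N$. Every operation performed after the initial deletion takes place inside $M\ba e$, which is a minor of $M$, so $N$ is a minor of $M$. The one step that warrants care is the identity $M\ba e = M'\ba e$, which is the sole place where the passage from $M'$ to $M$ is invoked; once it is established, the remainder is a direct unwinding of the definitions of deletion, contraction, and relaxation.
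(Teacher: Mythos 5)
Your proposal is correct and follows essentially the same route as the paper: both arguments reduce to the identity $M\ba(C\cup D)=M'\ba(C\cup D)$ (which the paper cites from \cite[Proposition~3.3.5]{Oxl11} and you verify directly from the definition of relaxation), identify this matroid with $N'$, and then apply \Cref{gelato} to contract $\{v_1,v_2\}$ and recover $N$. Your inline justification that deleting any single element of the relaxed circuit-hyperplane already makes $M$ and $M'$ agree is exactly the content of the cited proposition, so the two proofs differ only in whether that fact is quoted or proved.
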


\begin{proof}
This follows from \Cref{gelato} because
$M\ba (C\cup D)=M'\ba (C\cup D)$
\cite[Proposition~3.3.5]{Oxl11},
and both these matroids are equal to $N'$.
\end{proof}

\begin{claim}
\label{mohawk}
$M$ is not a gammoid.
\end{claim}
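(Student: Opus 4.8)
The plan is to argue by contradiction, working directly with a linkage representation in the style of \Cref{frypan}. Suppose that $M$ is a gammoid. Since $M$ is obtained from $M'$ by relaxing the circuit\dash hyperplane $C\cup D$ (\Cref{velcro}), the set $C\cup D$ is a basis of $M$, so \Cref{toupee} supplies a directed graph $G$ with $M=L(G,E(M),C\cup D)$; write $B=C\cup D$. Discarding every arc that leaves a vertex of $B$ changes no linking into $B$, so I may assume each vertex of $B$ is a sink. Independence in $M$ now means the existence of vertex\dash disjoint paths into $B$, and I would translate the matroid into cut data through Menger's theorem: the maximum number of vertex\dash disjoint paths from a set $X$ into $B$ equals $r_M(X)$, and if $X$ is a circuit then some set of $r_M(X)$ vertices meets every $X$\dash to\dash $B$ path and necessarily contains $X\cap B$, since each element of $X\cap B$ is its own one\dash vertex path.

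First I would assemble the relevant matroid facts, all read off from \Cref{frypan} and \Cref{blouse} together with the observation that relaxing a circuit\dash hyperplane alters only the rank of that hyperplane. One finds that $K_i:=S_i\cup C\cup v_i$ is a circuit of rank $r+2$; that $S_i\cup D\cup v_i$ is the uniform matroid $U_{r+2,2r+2}$, again of rank $r+2$; that $C\cup D$ and $D\cup\{v_1,v_2\}$ are both bases; and that for every $e\in S_1\cup S_2\cup\{v_1,v_2\}$ the set $\{e\}\cup C\cup D$ is a spanning circuit, so that the fundamental circuit of $e$ with respect to $B$ is the whole of $B$. Feeding these into the cut machinery gives the structure I would exploit: the circuit $K_i$ yields a set $U_i$ of $r$ vertices that, after deleting the two sinks of $C$, separates $S_i\cup v_i$ from $D$; the set $S_i\cup D\cup v_i$ yields a cut consisting of the $r+1$ sinks of $D$ and a single extra vertex $q_i$, and since no path into $C$ meets a sink of $D$, every path from $S_i\cup v_i$ into $C$ passes through the one bottleneck $q_i$; and the basis $D\cup\{v_1,v_2\}$, whose $D$\dash part can only link trivially, forces two vertex\dash disjoint paths carrying $v_1$ and $v_2$ into the two sinks $c_1,c_2$ of $C$, routed through $q_1$ and $q_2$ respectively (in particular $q_1\neq q_2$).

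The contradiction would then be produced by combining several of these forced linkings into one configuration that no single digraph can support. The mechanism I expect is that, once the two disjoint $\{v_1,v_2\}\to C$ paths and a maximum linking of a basis built from $S_i\cup v_i$, $C$ and $D$ are placed simultaneously, the only way to keep all the paths vertex\dash disjoint is to send a path from $C$ to the opposite vertex $v_j$. Such a route is precisely the arc $C\to v_j$ by which $D_i$ was obtained from $C_i$, and its presence makes $S_i\cup C\cup v_i$ independent; that is, $M$ would coincide with $M_i''$, contradicting \Cref{frypan} via \Cref{cutter}. Concretely, the target is to force either $r+1$ vertex\dash disjoint paths from $S_i\cup v_i$ to $D$ across the $r$\dash element cut $U_i$, or two vertex\dash disjoint paths from $S_i\cup v_i$ into $C$ through the single vertex $q_i$.

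I expect this final step to be the main obstacle, for a reason that is itself instructive. Taken in isolation, each of the cut inequalities above is consistent; one can satisfy all of them simultaneously as abstract capacity constraints, so no contradiction can be squeezed out of path counts alone. The argument must therefore track which vertices are actually shared between the competing path systems, and show that the vertex\dash disjointness demanded by one forced linking is incompatible with that demanded by another. This is the same delicate, case\dash heavy rerouting analysis that drives \Cref{frypan}, and it must genuinely use both that $|C|=2$, so that $K_1$ and $K_2$ compete for exactly the two sinks of $C$, and that $|D|=r+1$, so that $D\cup\{v_1,v_2\}$ is a basis while each $S_i\cup v_i$ reaches only $r$ of the sinks of $D$.
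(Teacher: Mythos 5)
Your proposal is not a proof: it is a plan whose decisive step is missing, and you say so yourself. Everything up to and including the cut analysis (the sets $U_i$, the bottleneck vertices $q_i$, the forced disjoint paths from $v_1,v_2$ into $C$) is plausible preparation, but the contradiction itself is only described in the conditional mood --- ``the contradiction \emph{would} then be produced,'' ``the mechanism I \emph{expect}.'' Worse, your own closing paragraph concedes that the cut inequalities you have extracted are simultaneously satisfiable as abstract capacity constraints, so that no contradiction follows from them, and that what is really needed is a ``delicate, case-heavy rerouting analysis'' tracking which vertices the competing path systems share. That analysis is precisely the content a proof would have to supply, and it is absent. The intended endgame is also not argued: you assert that the forced configuration makes $S_i\cup C\cup v_i$ independent (contradicting its being a circuit, by \Cref{frypan} and \Cref{velcro}), but nothing in the proposal shows that a hypothetical gammoid representation of $M$ must contain such a configuration; establishing this for an \emph{arbitrary} digraph $G$ with $M=L(G,E(M),C\cup D)$ is exactly the hard part, since $G$ need not resemble $C_i$ or $D_i$ at all.

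You should also know that the paper avoids this entire line of attack. Its proof of \Cref{mohawk} is a short rank computation: with $A=S_i\cup v_i$ and $B=S_j\cup v_j$, one checks that $r_M(A)=r_M(B)=r+1$, that $A\cup B\cup C$, $A\cup B\cup D$, and $C\cup D$ are spanning (the last because the relaxation turned $C\cup D$ into a basis), and that $A\cup B$, $A\cup C$, $A\cup D$, $B\cup C$, $B\cup D$ all have rank $r+2$. These values violate the Ingleton inequality, so $M$ is not representable over any field; since every gammoid is representable over every infinite field (\Cref{afghan}, via duality with transversal matroids), $M$ is not a gammoid. This sidesteps any reasoning about hypothetical linkage representations of $M$ --- which is fortunate, because such reasoning is exactly where your proposal stalls. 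If you want to salvage a direct argument, the burden is to show that the vertex-disjointness requirements of your forced linkings genuinely collide; until that is done, the claim remains unproved by your route.
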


\begin{proof}
Note that $S_{i}\cup v_{i}$ and $S_{j}\cup v_{j}$ both have
rank $r+1$ in $M'$ and hence in $M$.
Also, $S_{i}\cup S_{j}\cup C\cup \{v_{i},v_{j}\}$ and
$S_{i}\cup S_{j}\cup D\cup \{v_{i},v_{j}\}$ are both spanning in $M$,
as is $C\cup D$, since it is a basis.
Define $A$ and $B$ to be $S_{i}\cup v_{i}$ and $S_{j}\cup v_{j}$,
respectively.
Now
\begin{linenomath}
\begin{multline*}
r_{M}(A)+r_{M}(B)+r_{M}(A\cup B\cup C)+r_{M}(A\cup B\cup D)+r_{M}(C\cup D)\\
=2(r+1)+2(r+3)+(r+3)=5r+11.
\end{multline*}
\end{linenomath}
On the other hand, we can easily confirm that
$A\cup B$, $A\cup C$, $A\cup D$, $B\cup C$, $B\cup D$ all
have rank $r+2$ in $M$.
Therefore
\[
r_{M}(A\cup B)+r_{M}(A\cup C)+r_{M}(A\cup D)+
r_{M}(B\cup C)+r_{M}(B\cup D)=5r+10.
\]
From this it follows that $M$ violates the
Ingleton inequality \cite{Ing71}, so $M$ is not
representable over any field.
\Cref{afghan} implies that $M$ is not a gammoid.
\end{proof}

\begin{claim}
\label{ledger}
If $x$ is in $C\cup D$, then $M\ba x$ and $M/x$ are both gammoids.
\end{claim}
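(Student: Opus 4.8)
\emph{The plan} is to treat deletion and contraction separately, and within contraction to split according to whether $x$ lies in $C$ or in $D$.

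First I would dispose of deletion for every $x\in C\cup D$. Since $M$ is obtained from $M'$ by relaxing the circuit-hyperplane $C\cup D$ (\Cref{velcro}), the independent sets of $M$ are exactly those of $M'$ together with the single new set $C\cup D$. As $x$ lies in $C\cup D$, no independent set of $M\ba x$ can be this new set, so $M\ba x=M'\ba x$, which is also how \cite[Proposition~3.3.5]{Oxl11} is used in \Cref{skewer}. The matroid $M'$ is a gammoid, so \Cref{whisky} gives that $M\ba x$ is a gammoid.

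Next I would handle contraction when $x\in D$. Comparing \Cref{frypan} with \Cref{cutter} shows that $M$ and $M_i''$ have the same non-spanning circuits except that $S_i\cup C\cup v_i$ is a circuit of $M$ but not of $M_i''$; equivalently, the independent sets of $M_i''$ are those of $M$ together with the one extra set $S_i\cup C\cup v_i$, so $M_i''$ is precisely $M$ with the circuit-hyperplane $S_i\cup C\cup v_i$ relaxed. Contracting an element that avoids $S_i\cup C\cup v_i$ therefore removes the only discrepancy, giving $M/x=M_i''/x$. Any $x\in D$ avoids $S_i\cup C\cup v_i$ for both values of $i$, so $M/x=M_1''/x$ is a minor of the gammoid $M_1''$, hence a gammoid by \Cref{whisky}.

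The remaining, and hardest, case is the contraction of an element $x\in C$. Writing $C=\{x,q\}$, the difficulty is that $x$ now lies in both $S_1\cup C\cup v_1$ and $S_2\cup C\cup v_2$, so the previous argument fails for both $i$: one checks that $M_1''/x$ and $M_2''/x$ are the \emph{proper} relaxations of $M/x$ at the circuit-hyperplanes $P_1:=S_1\cup\{q\}\cup v_1$ and $P_2:=S_2\cup\{q\}\cup v_2$, that $P_1$ and $P_2$ are the only non-spanning circuits of $M/x$ through $q$, and that $M/x$ is a proper tightening of each of these gammoids. Since $q$ is thus not freely placed, \Cref{tipple} does not apply, and tightening does not preserve the class of gammoids, so no shortcut through the matroids already built is available. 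Instead I would produce a directed-graph representation of $M/x$ directly: starting from a graph for the gammoid $(M/x)\ba q=M'\ba q/x$ (a minor of $M'$, with a basis chosen as target via \Cref{toupee}), I would re-introduce $q$ as a single new source vertex whose out-neighbourhood meets every maximum linking of $S_1\cup v_1$ to the target and every maximum linking of $S_2\cup v_2$ to the target. This should force $X\cup q$ to be linked exactly unless $X$ already saturates one of these two linkings, installing $P_1$ and $P_2$ as the only new non-spanning circuits through $q$, and the resulting gammoid would then be checked against $M/x$ by an augmenting-path count in the spirit of \Cref{frypan} and \Cref{cutter}.

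\emph{The main obstacle} is precisely this last construction. Every natural way of supplying the extra linking capacity relaxes too much: routing new arcs through $C$, as in the passage from $C_i$ to $D_i$, frees the very sets $P_1,P_2$ that must remain dependent, while routing them through $D$ threatens the circuits supported on $S_i\cup D\cup v_i$. The crux is therefore to place the arcs out of $q$ so that the circuit-hyperplane $(C\cup D)\ba x$ is relaxed, and $q$ comes to depend on $S_1\cup v_1$ and on $S_2\cup v_2$ but on nothing smaller, while leaving every other circuit of $M'\ba q/x$ untouched; proving that this delicate balance is attained is the real content of the claim.
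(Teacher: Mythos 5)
Your treatment of deletion (for all $x\in C\cup D$) and of contraction when $x\in D$ is correct; the latter even takes a different route from the paper, identifying $M/x$ with $M_i''/x$ by comparing \Cref{frypan} and \Cref{cutter}, which is a perfectly sound argument. The problem is the case you yourself call the hardest: contraction of an element $x\in C$. There you do not have a proof at all. The proposed repair --- re-introducing $q$ as a new source vertex ``whose out-neighbourhood meets every maximum linking of $S_1\cup v_1$ and of $S_2\cup v_2$ to the target'' --- is a specification of what the finished graph should accomplish, not a construction, and you concede that verifying it is ``the real content of the claim.'' As written, the claim remains unproved for $x\in C$.

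The gap comes from a missed observation rather than an intrinsic obstacle. You tested free placement only at $q$, the surviving element of $C$, found the circuits $P_1,P_2$ of $M/x$ through $q$, and concluded that ``no shortcut through the matroids already built is available.'' That inference is wrong: the right element to examine is any $y\in D$. By \Cref{frypan} (carried over to $M$ by the relaxation), every non-spanning circuit of $M$ containing $y$ is an $(r+3)$-element subset of $S_{i}\cup D\cup v_{i}$ or of $S_{j}\cup D\cup v_{j}$; every such circuit avoids $x\in C$, and it spans a hyperplane of $M$ that does not contain $x$ (no circuit of $M$ through $x$ lies inside $S_{i}\cup D\cup v_{i}\cup x$). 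Consequently every circuit of $M/x$ containing $y$ is spanning, so $y$ is freely placed in $M/x$. Moreover $M/x\ba y=(M\ba y)/x$ is a gammoid, because $M\ba y=M'\ba y$ by your own deletion argument. Now \Cref{tipple} yields that $M/x$ is a gammoid. This is precisely the paper's proof, and it treats $x\in C$ and $x\in D$ symmetrically (for $x\in D$ take $y\in C$), so the case split you set up, and the detour through $M_i''$, are unnecessary.
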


\begin{proof}
Since $M\ba x=M'\ba x$, it follows that $M\ba x$ is a gammoid.
If $x$ is in $C$, let $y$ be an arbitrary element in $D$, and otherwise
let $y$ be an arbitrary element of $C$.
Then \Cref{frypan} shows that the only non-spanning circuits in
$M$ containing $y$ do not contain $x$, and any such circuit
spans a hyperplane of $M$.
Thus $y$ is in no non-spanning circuit of $M/x$; that is,
$y$ is freely placed in $M/x$.
But the first part of this argument implies that $M\ba y$, and hence
$M/x\ba y$, is a gammoid.
Now \Cref{tipple} implies that $M/x$ is a gammoid, as desired.
\end{proof}

To complete the proof, we now let $x$ be an element in
$S_{i}\cup S_{j}\cup \{v_{i},v_{j}\}$, and we
show that $M/x$ and $M\ba x$ are gammoids.
The first case is simple, since $M/x=M'/x$.
Thus we consider $M\ba x$.
We let $\{i,j\}=\{1,2\}$, where $x$ is in $S_{i}\cup v_{i}$.
The non-spanning circuits of $M\ba x$ are those that do not
intersect $C\cup D$ (that is, the non-spanning circuits of $N'\ba x$),
along with any $(r+3)$\dash element subset
of $S_{j}\cup C\cup s_{j}$, $S_{j}\cup D\cup s_{j}$, or
$((S_{i}\cup v_{i})-x)\cup D$.
But it follows easily from \Cref{cutter} that these are the
non-spanning circuits of $M_{i}''\ba x$, so
$M\ba x=M_{i}''\ba x$, and hence the proof of
\Cref{maintheorem} is complete.
\end{proof}

%\bibliographystyle{/Users/Home/LaTeX/Bibliography/MRStyle}
%\bibliography{/Users/Home/LaTeX/Bibliography/References}

%\bibliographystyle{/Users/Dillon/Maths/LaTeX/Bibliography/MRStyle}
%\bibliography{/Users/Dillon/Maths/LaTeX/Bibliography/References}

\end{document}